\newtheorem{thm}{Theorem}
\newtheorem{conj}{Conjecture}
\newtheorem{lem}[thm]{Lemma}
\newtheorem{discu}{Discussion:}
\begin{document}
	\title{\textbf{Metric Dimension of Villarceau Grids}}
	\author{\begin{tabular}{cccc}
			\textbf{S. Prabhu$^{\text a, }$\thanks{Corresponding author: drsavariprabhu@gmail.com}, D. Sagaya Rani Jeba$^{\text b}$, Paul Manuel$^{\text c}$, Akbar Davoodi$^{\text d}$} \\
		\end{tabular}\\
		\begin{tabular}{cccc}
{\small $^{\text a}$Department of Mathematics, Rajalakshmi Engineering College, Chennai 602105, India} \\
{\small $^{\text b}$Department of Mathematics, Panimalar Engineering College, Chennai 600123, India } \\
{\small $^{\text c}$Department of Information Science, College of Life Sciences, Kuwait University, Kuwait } \\
$^{\text d}$\small The Czech Academy of Sciences, Institute of Computer Science\\ \small Pod Vodárenskou věží 2, 182 07 Prague, Czech Republic\\
	\end{tabular}}
	\maketitle
	\vspace{-0.5 cm}
	\begin{abstract}
 The metric dimension of a graph measures how uniquely vertices may be identified using a set of landmark vertices. This concept is frequently used in the study of network architecture, location-based problems and communication. Given a  graph $G$, the metric dimension, denoted as $\dim(G)$, is the minimum size of a resolving set, a subset of vertices such that for every pair of vertices in $G$, there exists a vertex in the resolving set whose shortest path distance to the two vertices is different. This subset of vertices helps to uniquely determine the location of other vertices in the graph. A basis is a resolving set with a least cardinality. Finding a basis is a problem with practical applications in network design, where it is important to efficiently locate and identify nodes based on a limited set of reference points. The Cartesian product of $P_m$ and $P_n$ is the grid network in network science. In this paper, we investigate two novel types of grids in network science: the Villarceau grid Type I and Type II. For each of these grid types, we find the precise metric dimension.
		{\small }
	
	\end{abstract}
\textbf{Keywords:}{\small \  Villarceau grid; resolving set;  basis; metric dimension  }\\
\textbf{AMS Subject Classifications:}{\small \ 05C12}	
\section{Introduction}
 Graphs are widely used to model relationships between different entities and play a fundamental role in various disciplines, including computer science, operations research and biology. Let $G$ be a connected graph having vertex set $V(G)$. For $r >0$  and $s\in V(G)$, $N_r(s)$ denotes the set of vertices of $G$ that are at distance  $r$ from $s$. The metric dimension provides a quantitative measure of how effectively a set of vertices can function as landmarks to uniquely determine the location or identity of other vertices in the graph. 
  \begin{equation*}
\mathscr{C}_{\mathscr{R}}(s)=(d(s,r_{1}), d(s,r_{2}), \ldots, d(s,r_{l}))
\end{equation*}
 is the $ l $-vector that represents the code of $s \in V(G)$ with respect to $\mathscr{R}= \{r_{1},r_{2},\ldots,r_{l}\}$. The set $\mathscr{R}$ is a resolving set for $G$ if different vertices of $G$ have distinct codes with regard to $\mathscr{R}$. The lowest cardinality of a resolving set for $G$ is denoted as $\dim(G)$, the resolving number or dimension. This idea has been significantly explored in combinatorics and numerous computer science applications. This graph's dimension is a fundamental parameter that measures the bare minimum of vertices required to uniquely determine the position of every other vertex in the graph, according to their distances to a selected set of reference vertices.\\
 This parameter is used in diverse domains, including robotic navigation \cite{KhRaRo96}, chemistry \cite{ChErJo00, Jo93}, image processing, pattern recognition\cite{MeTo84}, linked joins in graphs \cite{SeTa04}, geometrical routing protocols \cite{LiAb06, MeTo84, Go04} and network intruder detection\cite{BeEbEr06}. Numerous intriguing similarities between the coin weighing and mastermind game have been demonstrated in \cite{SoSh63, CaHeMo07}. 
  Practical scenarios often employ heuristics and approximation algorithms to determine or estimate this parameter in practical scenarios.\\
  This investigation traces back to the early 1970s, with one of the seminal papers by Harary and Melter \cite{HaMe76} in 1976, motivated by challenges in chemical graph theory and communication networks. Initially introduced by Slater in graphs, the concept gained more attention as researchers explored its properties and applications. As soon as researchers recognized the practical applications in numerous domains, there was a surge in interest in algorithmic approaches and heuristics for computing or approximating the metric dimension. Several papers during this period focused on efficient algorithms and approximation techniques.\\
  Despite its NP-hard complexity for arbitrary graphs, significant strides have been achieved in solving the metric dimension problem for particular classes of graphs, leveraging efficient algorithms and approximation techniques. Beyond theoretical advances, practical applications of the metric dimension and its variants continue to emerge, underscoring its relevance across various disciplines.\\
The resolving-power domination number is examined in \cite{PrDeAr22} for probabilistic neural networks, a specific type of RNN (Recurrent Neural Networks). The performance of parallel computing systems on a grand scale is significantly influenced by the quality of the interconnection networks. The various processing components that comprise up a parallel computing system rely extensively on interconnection networks to exchange data and communicate with one another. The metric size and fault tolerance of fractal cubic networks were computed by Arulperumjothi et al.~\cite{ArKlPr23}. An irregular network with a convex triangular metric size is computed in \cite{PrSaAr23}. Prabhu et al.~\cite{PrMaAr20} refute the earlier findings from \cite{HaKhMa20} and \cite{RaHaIm19} by re-examining the metric dimension variant for butterfly networks, silicate networks, and Benes networks.   
Furthermore, a variety of architectures, notably generalized fat trees \cite{PrMaDa24}, trees \cite{KhRaRo96}, torus networks \cite{MaRaRa06}, 2-trees \cite{BeDaJa17}, multi-dimensional grids \cite{KhRaRo96, KeTrYe18}, Benes networks \cite{MaAbRa08}, hypercube \cite{Be13}, honeycomb networks \cite{MaRaRa08, RaThMo11}, Illiac networks \cite{RaRaGo14}, enhanced hypercubes \cite{RaRaMo08},  Petersen graphs \cite{ShShWu18} and circulant graphs \cite{ImBaBo12,  GrMiRa14, Ve17}  have been analysed. This encompasses various types of graphs, such as Toeplitz graphs \cite{LiNaSi19}, infinite graphs \cite{CaHeMo09}, Cayley graphs \cite{FeGoOe06}, Kneser and Johnson graphs\cite{BaCa13},  Cartesian product graphs\cite{CaHeMo07}, chain graphs \cite{FeHeMe15}, fullerene graphs \cite{AkFa19}, Grassmann graphs\cite{BaMe11}, convex polytopes \cite{ImSi16, KrKoCa12}, permutation graphs \cite{HaKaYi14}, bilinear graphs \cite{FeWa12, GuWaLi13}, wheel-related graphs\cite{SiIm14}, incidence graph \cite{Ba18}, regular graphs \cite{GuWa13} and unicyclic graphs \cite{PoZh02}. Numerous details about this topic are available in \cite{TiFrLl23}.   \\ 
A grid is a graph having vertices $\{v_{i,j}:(i,j)\in [\![1,m]\!] \times [\![1,n]\!]\}$ and edges  $\{v_{i,j}v_{i',j'}: |i-i'|+|j-j'|=1\}$. This is $m\times n$ grid, also known as Cartesian product $P_m \square  P_n$ \cite{CrOs15}. There are different grid configurations, including square, hexagonal, triangular, and rectangle ones. Because of their unique characteristics and uses, each kind of grid is appropriate for a variety of jobs and applications. Grids are a useful tool for organising data in spreadsheets, representing spatial data in geographic information systems, and providing a foundation for alignment and layout in graphic design. Mathematicians, computer scientists, and network analysts frequently utilise grid graphs to simulate spatial relationships, connectivity, and algorithms. In computer networks, grids are important because they offer an organised framework for resource management, communication, and service optimisation. Enhancing network performance, scalability, stability, and efficiency in diverse networked contexts is possible through the use of grid-based approaches and algorithms. Tomescu and Melter \cite{MeTo84} examined the metric size for grid graphs where the distances are defined in $L_1$ metrics. They demonstrated that, under the $L_1$ metric, the metric size is two, whereas under the $L_\infty$ metric, it is three. Under the $L_1$ metric, Kuller \cite{KhRaRo96} yields a characterization of the metric size of $d$-dimension. The metric basis is provided in \cite{SaBaTi22} for the square grid graph. The study of graceful labelling for the grid graph has been investigated by Acharya and Gill \cite{AcGi81}. This article examines dimensions for novel grid structures, such as the Villarceau grids.
  	
\section{Villarceau Grids}	
The Villarceau circles \cite{LeHe14,Ma23} are a set of circles formed by the intersection of a plane inclined at a specific angle to the axis of a torus. They are named after the 19th-century French mathematician Yvon Villarceau, who studied and described these circles. When a plane intersects a torus (a surface shaped like a doughnut or a ring) at a non-perpendicular angle to the torus axis, these circles, known as the Villarceau circles, form a distinctive and captivating pattern. They are essential in understanding the geometry and topology of toroidal shapes and find applications across various mathematical and scientific disciplines. The properties of the Villarceau torus are studied in depth by Manuel \cite{Ma23}.\\ 
The Villarceau torus is a geometric configuration that arises from the intersection of a torus with a plane at a specific inclination to the torus's axis. It provides insights into the geometric relationships and topological properties of toroidal shapes, making it significant in geometry and topology.
This mathematical concept has applications in various fields and provides a unique perspective on the interaction between planes and tori. Figure \ref{VT1} illustrates the Villarceau torus, where the red and green circles, also known as the Villarceau circles, have an oblique inclination compared to the white and yellow circles of the torus.  The Villarceau grids are motivated by the Villarceau torus. 
\begin{figure}[H]
	\centering
\includegraphics[scale=0.3]{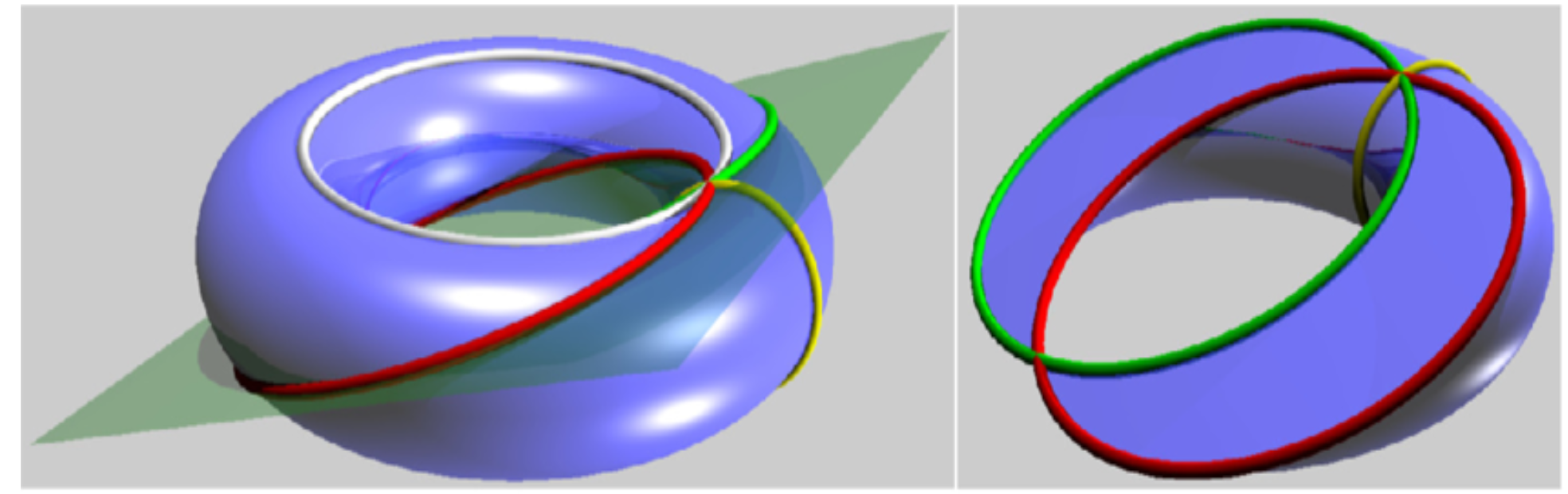}
	\caption{Villarceau torus }
	\label{VT1}
\end{figure}

Our understanding of the Villarceau torus has empowered us to define the Villaceau grid, a novel grid layout distinct from traditional designs. The study of the Villarceau grid enhances our comprehension of complex geometric structures and enriches mathematical exploration. \\
The Villarceau grid Type I is denoted by $VG^1_{m,n}$, where $1 \leq m\leq n$. The vertex set is defined by $\{ (2i,2j+1) : i\in [\![0,n]\!], j \in [\![0,m-1]\!] \} \bigcup \{ (2i+1,2j) : i\in [\![0,n-1]\!], j \in [\![0,m]\!]   \}$ and there is an edge between $(i_1,j_1)$ and $(i_2,j_2)$  if $|i_1-i_2|=1 $ and $ |j_1-j_2|=1$. The notation $[\![ r, s]\!]$ will be utilised for expressing the range of numbers $r<s$, $\{r, r+1, \ldots, s\}$.
\begin{figure}[H] 
	\centering
	\subfloat[]{\includegraphics[scale=.90]{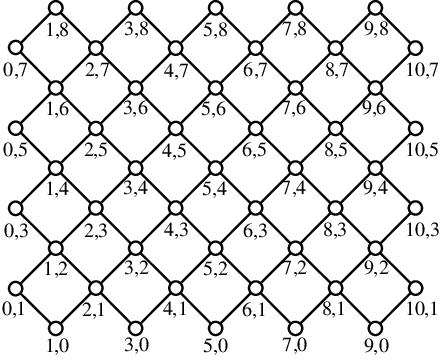}} 
	\quad \quad \quad \quad \quad \quad 
	\subfloat[]{\includegraphics[scale=.70]{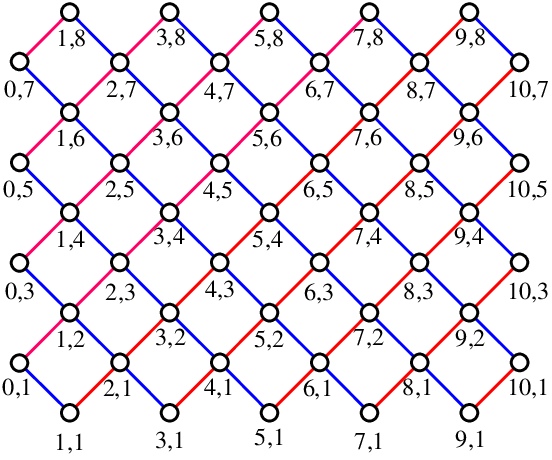}}
	\quad \quad \quad \quad \quad 
	\caption{(a) $VG^1_{4,5}$; (b) Acute lines are depicted in red, while obtuse lines are depicted in blue.} \label{F2}
\end{figure}
The Villarceau grid Type II is  denoted as  $VG^2_{m,n}$, where $1\leq m < n$ (Figure \ref{F3}). The vertex set is defined by $ \{ (2i+1,2j+1) : i\in [\![0,n-2]\!], j \in [\![0,m-1]\!] \} \bigcup \{ (2i,2j) : i\in [\![0,n-1]\!], j \in [\![0,m]\!] \} $ and the vertices $(i_1,j_1)$ and $(i_2,j_2)$ are adjacent if $|i_1-i_2|=1 $ and $ |j_1-j_2|=1$.

\begin{figure}[H]
	\centering
	\includegraphics[scale=0.9]{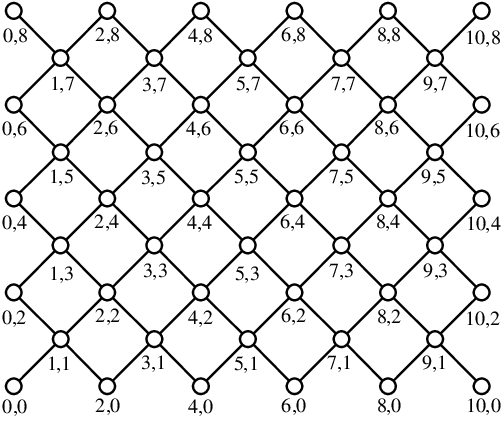}
	\caption{$VG^2_{4,6}$ }
	\label{F3}
\end{figure}
A comparison between the standard grid and the Villarceau grid is shown in the following table.
\begin{table}[H] 
	\centering
	\caption{Grid and Villarceau grid architectures comparison }\label{TI}
	\begin{tabular}{|l|l|l|l|} \hline 
		\multicolumn{1}{|c|}{   } &\multicolumn{1}{c|}{Grid $(P_m \square P_n)$} &\multicolumn{1}{c|}{Villarceau grid Type I $(VG^1_{m,n})$} &\multicolumn{1}{c|}{Villarceau grid Type II $(VG^2_{m,n})$}\\ \hline
		Number of vertices   & $mn$  & $2mn+m+n$ & $2mn-m+n$\\ \hline
  Number of edges   & $2mn-m-n$  & $4mn$ & $4m(n-1)$\\ \hline
  Diameter   & $m+n-2$  & $2n$ & $2n$\\ \hline
  Degree of vertices   & $2, 3, 4$  & $2, 4$ & $1, 2, 4$\\ \hline
  Metric dimension   & $2$  & $  \left\{ \begin{array}{rll}
	 =3 & \mbox{for} \enspace n \leq 2m+1\\
	 > 3 &\mbox{for} \enspace n > 2m+1  \\
	 \end{array}\right.$ & $  \left\{ \begin{array}{rll}
	 =3 & \mbox{for} \enspace n \leq 2m+1\\
	 > 3 &\mbox{for} \enspace n > 2m+1  \\
	 \end{array}\right.$\\ \hline
Average distance   & \makecell{$(m^3n^2-mn^2+$\\$m^2n^3-m^2n)/$ \\$3(m^2n^2-mn)$}  & \makecell{$(10n^3-10n+40m^2n^3+$\\$40mn^3+60m^2n^2+30mn^2+$\\$30m^2n+20m^4n+40m^3n  -$\\$20mn-4m^5+10m^3)/$ \\$15(4m^2n^2+4m^2n+4mn^2+$\\$m^2+n^2-m-n)$} & \makecell{$2(15m^2n+20m^3n+10m^4n-$\\$2m^5-10m^4-5m^3-$\\$20m^3n^2+5n^3-20m^2n^2+$\\$10m^2-15mn^2+7m+$ \\$20n^2m^3+20n^3m^2-10n^2m^2+$\\$20mn^3-10mn-5n)/$  \\$15(4m^2n^2-4m^2n+4mn^2+$\\$-4mn+m^2+m+n^2-n)$}\\ \hline
	\end{tabular}
	\label{Table1}
\end{table}
\section{Main Results}	
\begin{thm}{\rm\cite{KhRaRo96}}\label{md2} 
If a graph $G$'s metric basis is $\{s, t\}$ , then the following criteria are satisfied.
\begin{enumerate}[label={\rm (\roman*)}]
 \item The vertex $s,t$ in $G$ has a degree of $3$ or less.
 \item Between vertices $s$ and $t$ in $G$, there is precisely a shortest path $P$.
\item  The degree of an internal vertex of $P$ in $G$ is less than or equal to $5$.
\end{enumerate}
\end{thm}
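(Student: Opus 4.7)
The plan is to exploit one uniform tool everywhere: for any edge $uv$ of $G$ and any vertex $w$, the triangle inequality gives $|d(u,w)-d(v,w)|\le 1$. Combined with the fact that the two-element set $\{s,t\}$ must resolve every pair of vertices of $G$, this constrains the codes of the neighbors of any given vertex to a small finite set, and each of (i), (ii), (iii) follows by pigeonhole together with a bit of bookkeeping.

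For (i), every neighbor $u$ of $s$ satisfies $d(s,u)=1$, so its code has first coordinate $1$. Applying the triangle inequality to the edge $su$ with landmark $t$ forces the second coordinate $d(t,u)$ into the three-element set $\{d(s,t)-1,\,d(s,t),\,d(s,t)+1\}$. Hence there are only three possible codes for neighbors of $s$, so any four neighbors would share a code and destroy resolvability; this gives $\deg(s)\le 3$, and the symmetric argument gives $\deg(t)\le 3$.

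For (ii), I would argue by contradiction, supposing two distinct shortest $s$--$t$ paths $P_{1}, P_{2}$ exist. Walking from $s$, let $x$ be the last vertex at which they still coincide and $u_{i}$ the vertex on $P_{i}$ immediately after $x$. Since each $u_{i}$ sits on a shortest $s$--$t$ path, one has $d(s,u_{i})=d(s,x)+1$ and $d(u_{i},t)=d(s,t)-d(s,x)-1$, so $u_{1}$ and $u_{2}$ carry identical codes with respect to $\{s,t\}$, contradicting resolvability.

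The real work is in (iii). Let $v$ be an internal vertex of the unique shortest path $P$ and write $(a,b)=(d(s,v),d(v,t))$, noting $a+b=d(s,t)$ because $v\in P$. Any neighbor $u$ of $v$ has code $(a+\varepsilon_{1},b+\varepsilon_{2})$ for some $\varepsilon_{1},\varepsilon_{2}\in\{-1,0,1\}$, and the triangle inequality $d(s,u)+d(u,t)\ge d(s,t)$ forces $\varepsilon_{1}+\varepsilon_{2}\ge 0$; this eliminates $(-1,-1),(-1,0),(0,-1)$. Of the six survivors, $(0,0)$ is already $v$'s own code and cannot recur. The codes $(-1,+1)$ and $(+1,-1)$ both satisfy $\varepsilon_{1}+\varepsilon_{2}=0$, which means the corresponding neighbor lies on \emph{some} shortest $s$--$t$ path through $v$; by the uniqueness of $P$ established in (ii), any such neighbor must be the predecessor or successor of $v$ on $P$. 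The remaining off-$P$ neighbors can therefore only use the three codes $(0,+1),(+1,0),(+1,+1)$, yielding at most $2+3=5$ neighbors in total. The delicate point, and the main obstacle, is this last step: feeding the uniqueness from (ii) into the enumeration so that the codes $(-1,+1)$ and $(+1,-1)$ are genuinely consumed by the two $P$-neighbors of $v$ and not wasted on extraneous neighbors off $P$.
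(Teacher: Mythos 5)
Theorem~\ref{md2} is quoted from \cite{KhRaRo96} and the paper supplies no proof of its own, so your argument can only be measured against the standard one: it is correct and follows the same code-counting route (pigeonhole on the at most three possible codes of neighbours of $s$; identical codes of the two successors at the first branch point of two distinct shortest $s$--$t$ paths; and the enumeration of admissible code shifts $(\varepsilon_1,\varepsilon_2)$ with $\varepsilon_1+\varepsilon_2\ge 0$ around an internal vertex). The only remark is that the ``delicate point'' you single out in (iii) is not actually delicate: resolvability alone guarantees that each of the five admissible codes $(-1,+1),(+1,-1),(0,+1),(+1,0),(+1,+1)$ is realised by at most one neighbour of $v$, so $\deg(v)\le 5$ follows immediately, without needing to invoke the uniqueness of $P$ to match the codes $(-1,+1)$ and $(+1,-1)$ to the two $P$-neighbours.
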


 For $n = m = 1$, $VG^1_{1,1} \cong  C_4$ where $C_4$ is a cycle with 4 vertices whose metric dimension is 2, which is discussed in \cite{KhRaRo96}.
 
\begin{lem} \label{lem1}
If $2 \leq n\leq 2m+1$ and $m\neq n$, then $ \dim(VG^1_{m,n}) >2$.
 \end{lem}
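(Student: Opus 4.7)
The plan is to apply Theorem \ref{md2} and rule out every candidate basis of size two. Since every vertex of $VG^1_{m,n}$ has degree $2$ or $4$, condition (i) forces both vertices of a size-$2$ basis to be degree-$2$ (boundary) vertices, and condition (iii) is automatic as the maximum degree is $4 \leq 5$. It therefore suffices, for every pair $\{s,t\}$ of degree-$2$ vertices of $VG^1_{m,n}$, to prove either that the shortest $s$-$t$ path is not unique (contradicting condition (ii)) or that $\{s,t\}$ fails to resolve some pair of vertices.

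A convenient first step is the coordinate change $u=(a+b-1)/2$, $v=(a-b+1)/2$. Under this rotation the diagonal adjacencies of $VG^1_{m,n}$ become axis-parallel unit steps, so the graph embeds as a subgraph of $\mathbb{Z}^2$ supported on a hexagonal (octagonal-boundary) region; the degree-$2$ vertices are exactly those lying on one of the four oblique boundary segments. Distances in $VG^1_{m,n}$ agree with Manhattan distances whenever a monotone lattice path between the two endpoints stays inside the region. If $s$ and $t$ share neither a common column $u_s=u_t$, a common row $v_s=v_t$, nor a single oblique boundary line, then the bounding rectangle of $\{s,t\}$ contains at least one interior $2\times2$ lattice cell lying entirely in the hexagon; swapping the two moves around this cell produces two distinct monotone shortest paths, violating condition (ii).

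If on the other hand $s$ and $t$ lie on a common grid line, say $u_s=u_t=c$, the shortest $s$-$t$ path is forced to move monotonically in $v$ and is therefore unique, so (ii) holds and unresolved vertices must be produced directly. The natural construction is reflection across the column $u=c$: choose $a\geq 1$ and $v$ so that $(c-a,v)$ and $(c+a,v)$ are both vertices and the monotone lattice paths from each of these two points to $s$ and to $t$ remain inside the hexagon; the two vertices then have equal Manhattan, and hence equal graph, distances to both $s$ and $t$, so their codes coincide. Same-row pairs and same-oblique-line pairs are treated by analogous reflections about the appropriate axis.

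The main obstacle is the collection of corner sub-cases in which the naive reflection leaves the graph, for instance $c=0$ or $c=n+m-1$, or the analogous extreme row and oblique-line positions. In each such sub-case one must pick by hand an unresolved pair of vertices on the opposite side of the hexagon whose Manhattan offsets from $s$ and from $t$ coincide, and then check that no boundary-forced detour inflates their graph distances above the Manhattan values. The hypothesis $2\leq n\leq 2m+1$ enters precisely here: it ensures the hexagon is wide enough for the required mirror vertices to exist inside $VG^1_{m,n}$ and for the relevant monotone lattice paths to stay inside the region, so the reflection argument covers all pairs $\{s,t\}$ and yields $\dim(VG^1_{m,n})>2$.
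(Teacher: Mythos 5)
Your overall strategy is exactly the paper's: invoke Theorem~\ref{md2} to force both vertices of a putative $2$-element basis to be degree-$2$ boundary vertices joined by a unique shortest path, and then kill each surviving configuration by exhibiting a pair of vertices with identical codes. The rotated coordinates are a nice organizing device (the acute and obtuse lines become rows and columns, and the ``unique shortest path'' condition becomes transparent), and the reflection idea is the right mechanism for producing unresolved pairs. So the skeleton is correct and matches the published argument.

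The gap is that the skeleton is all you give: the decisive step is announced, not executed. You write that ``one must pick by hand an unresolved pair of vertices \dots and then check that no boundary-forced detour inflates their graph distances,'' but that checking \emph{is} the proof --- the paper's argument consists of twelve explicit configurations (consecutive pairs on each of the four boundaries, endpoints of obtuse lines, endpoints of acute lines), each with concrete witness vertices and verified distance equalities, and it is precisely in these boundary and corner configurations that a reflection can leave the region. Two supporting claims are also asserted rather than proved: (1) that graph distance in $VG^1_{m,n}$ coincides with Manhattan distance in the rotated picture (you need this both for the ``two monotone shortest paths'' argument and for the reflection argument; it holds because the rotated region is an intersection of half-planes of slopes $\pm1$, but that deserves a line of justification); and (2) that any pair of degree-$2$ vertices not sharing a row, a column, or a boundary segment admits two distinct shortest paths --- your ``interior $2\times2$ cell'' heuristic is plausible but not verified, and the near-corner pairs are exactly where it could conceivably fail. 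Until the witness pairs are written down and their distances computed, the lemma is not established; as it stands this is a correct plan for the paper's proof rather than a proof.
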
 
\begin{proof}
Suppose, $ \dim( VG^1_{m,n})=2$ and $\{ u,v\}$ is a resolving set of $VG^1_{m,n}$. By Theorem~\ref{md2}, there is a unique shortest $u,v$-path  wherein $\deg_G(u)\le 3$, $\deg_G(v)\le 3$. The following cases need to be explored:\\ 
\textbf{Case 1}: $u$ and $v$ are on a same boundary. As there are four boundaries, We have four subcases as follows: \\
\textbf{Case 1.1}: $u=(2t+1,0)$ and $v=(2t+3,0) $, $ t\in [\![0,n-2]\!].$\\
Then $d(u,(2t+1,2)) = d(v,(2t+1,2))$ and $d(u,(2t+3,2)) = d(v,(2t+3,2))$, a contradiction.\\
\textbf{Case 1.2:}
$u=(0,2t+1)$ and $v=(0,2t+3)$, $ t\in [\![0,m-2]\!].$ \\
Then $d(u,(2,2t+1)) = d(v,(2,2t+1))$ and $d(u,(2,2t+3)) = d(v,(2,2t+3))$, a contradiction.\\
\textbf{Case 1.3:}
$u=(2t+1,2m)$ and $v=(2t+3,2m)$, $ t\in [\![0,n-2]\!].$ \\
We get $d(u,(2t+1,2m-2)) = d(v,(2t+1,2m-2))$ and $d(u,(2t+3,2m-2)) = d(v,(2t+3,2m-2))$, a contradiction.\\
\textbf{Case 1.4:}
$u=(2n,2t+1)$ and $v=(2n,2t+3)$, $ t\in [\![0,m-2]\!].$ \\
We have $d(u,(2n-2,2t+1)) = d(v,(2n-2,2t+1))$ and $d(u,(2n-2,2t+3)) = d(v,(2n-2,2t+3))$, a contradiction.\\
\textbf{Case 2:} $u$ and $v$ are the endpoints of the same obtuse line (refer to Figure \ref{F2}).\\
\textbf{Case 2.1:}
$u=(2t+1,0)$ and $v=(0,2t+1)$, $ t\in [\![0,m-1]\!].$ \\
Then  $d(u,(2t+3,0)) = d(v,(2t+3,0))$ and $d(u,(2t+3,2)) = d(v,(2t+3,2))$, a contradiction.\\
\textbf{Case 2.2:}
$u=(2t+1,0)$ and $v=(2t-2m+1,2m)$, $ t\in $[\![$m, n-2$]\!]$.$ \\
We get $d(u,(2t+3,0)) = d(v,(2t+3,0))$ and $d(u,(2t+3,2)) = d(v,(2t+3,2))$, a contradiction.\\ 
\textbf{Case 2.3:}
$u=(2n-1,0)$ and $v=(2n-2m-1,2m).$ \\
Then $d(u,(2n-2m-3,2m)) = d(v,(2n-2m-3,2m))$ and $d(u,(2n-2m-3,2m-2)) = d(v,(2n-2m-3,2m-2))$, a contradiction.\\
\textbf{Case 2.4:}
$u=(2n,2t+1)$ and $v=(2n-2m+2t+1,2m)$, $ t\in [\![0,m-1]\!].$ \\
We have $d(u,(2n-2m+2t-1,2m)) = d(v,(2n-2m+2t-1,2m))$ and $d(u,(2n-2m+2t-1,2m-2)) = d(v,(2n-2m+2t-1,2m-2))$, a contradiction.\\ 
\textbf{Case 3:} $u$ and $v$ are the endpoints of a same acute line.\\
\textbf{Case 3.1:}
$u=(0,2t+1)$ and $v=(2m-2t-1,2m)$, $ t\in [\![0,m-1]\!].$ \\
Then $d(u,(2m-2t+1,2m)) = d(v,(2m-2t+1,2m))$ and $d(u,(2m-2t+1,2m-2)) = d(v,(2m-2t+1,2m-2))$, a contradiction.\\
\textbf{Case 3.2:}
$u=(2t+1,0)$ and $v=(2m+2t+1,2m)$, $ t\in [\![0, n-m-2]\!].$ \\
We have $d(u,(2m+2t+3,2m)) = d(v,(2m+2t+3,2m))$ and $d(u,(2m+2t+3,2m-2)) = d(v,(2m+2t+3,2m-2))$, a contradiction.\\
\textbf{Case 3.3:}
$u=(2n-1,2m)$ and $v=(2n-2m-1,0).$ \\
We get $d(u,(2n-2m-3,0)) = d(v,(2n-2m-3,0))$ and $d(u,(2n-2m-3,2)) = d(v,(2n-2m-3,2))$, a contradiction.\\
\textbf{Case 3.4:}
$u=(2n-2t-1,0)$ and $v=(2n,2t+1)$, $ i\in [\![0,m-1]\!].$ \\
Then $d(u,(2n-2t-3,0)) = d(v,(2n-2t-3,0))$ and $d(u,(2n-2t-3,2)) = d(v,(2n-2t-3,2))$, a contradiction.\\
Therefore, we conclude that $ \dim( VG^1_{m,n}) >2$.
\end{proof}
\begin{lem} \label{lemma1}
If $n\geq 2$ and $m=n$, then $ \dim(VG^1_{m,n}) >2$.
 \end{lem}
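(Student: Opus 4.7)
The plan is to mirror the case analysis of Lemma~\ref{lem1}, specialized to the square case $m=n$. I would begin by assuming for contradiction that $\dim(VG^1_{n,n})=2$ with basis $\{u,v\}$. Theorem~\ref{md2} forces $\deg(u),\deg(v)\le 3$; since every vertex of $VG^1_{n,n}$ has degree $2$ or $4$ according to the comparison table, both $u$ and $v$ must in fact be boundary vertices of degree $2$.

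Next I would enumerate all placements of $\{u,v\}$ on the boundary, organised exactly as in Lemma~\ref{lem1}: (1) same boundary, (2) endpoints of a common obtuse line, and (3) endpoints of a common acute line. Under $m=n$ the four same-boundary subcases 1.1--1.4 transfer verbatim, and the bulk of the subcases 2.1, 2.4, 3.1, 3.4 still admit the resolver pairs written down in Lemma~\ref{lem1} as long as the parameter $t$ lies in the interior of its range. The subcases 2.2, 2.3, 3.2, 3.3 drop out entirely because either their ranges of $t$ become empty or the coordinates of $v$ prescribed there leave the grid when $m=n$.

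The main obstacle is the handful of extreme parameter values, namely $t=n-1$ in Cases 2.1 and 3.1 and $t=0$ in Cases 2.4 and 3.4, which single out the four principal diagonals of the square. At these extremes the resolvers used in Lemma~\ref{lem1} (for instance $(2t+3,0)$) no longer lie in $VG^1_{n,n}$, so I must supply substitute pairs. Concretely, for the principal obtuse diagonal $u=(2n-1,0)$, $v=(0,2n-1)$ I would take $(0,1)$ and $(2n-2,2n-1)$, which both have code $(2n-1,\,2n-2)$; for the principal acute diagonal $u=(1,0)$, $v=(2n,2n-1)$ I would take $(3,0)$ and $(1,2)$, which both have code $(2,\,2n-1)$; the two remaining extreme configurations are handled by analogous pairs obtained via the reflection symmetries of the square. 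Each such repetition of a code contradicts the resolving property of $\{u,v\}$ and completes the proof. Verification is a direct distance computation, using that in $VG^1_{n,n}$ the graph distance equals $\max(|\Delta i|,|\Delta j|)$ whenever the two endpoints have the same parity of $i+j$ and the shortest diagonal path is not obstructed by the boundary.
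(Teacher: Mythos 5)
Your proposal is correct and follows essentially the same route as the paper: replicate the case analysis of Lemma~\ref{lem1}, observe that the same-boundary cases and the generic diagonal cases carry over while several subcases become vacuous when $m=n$, and treat the four principal diagonals separately with fresh witness pairs (the paper does this in its Cases 2.2 and 3.2 with different but equivalent witnesses). Your only slip is a harmless bookkeeping swap — the failing extremes are $t=0$ in Case 3.1 and $t=n-1$ in Case 3.4, not the other way around — but the four exceptional configurations you identify and repair are the right ones and your substitute pairs check out.
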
 
\begin{proof}
Suppose, $ \dim( VG^1_{m,n})=2$ and $\{ u,v\}$ is a resolving set for $VG^1_{m,n}$. By Theorem~\ref{md2} we consider the cases below. \\ 
\textbf{Case 1}: $u$ and $v$ are on a same boundary.  \\
\textbf{Case 1.1}: $u=(2t+1,0)$ and $v=(2t+3,0) $, $ t\in [\![0,n-2]\!].$\\
Then $d(u,(2t+1,2)) = d(v,(2t+1,2))$ and $d(u,(2t+3,2)) = d(v,(2t+3,2))$, a contradiction.\\
\textbf{Case 1.2:}
$u=(0,2t+1)$ and $v=(0,2t+3)$, $ t\in [\![0,m-2]\!].$ \\
Then $d(u,(2,2t+1)) = d(v,(2,2t+1))$ and $d(u,(2,2t+3)) = d(v,(2,2t+3))$, a contradiction.\\
\textbf{Case 1.3:}
$u=(2t+1,2m)$ and $v=(2t+3,2m)$, $ t\in [\![0,n-2]\!].$ \\
We get $d(u,(2t+1,2m-2)) = d(v,(2t+1,2m-2))$ and $d(u,(2t+3,2m-2)) = d(v,(2t+3,2m-2))$, a contradiction.\\
\textbf{Case 1.4:}
$u=(2n,2t+1)$ and $v=(2n,2t+3)$, $ t\in [\![0,m-2]\!].$ \\
We have $d(u,(2n-2,2t+1)) = d(v,(2n-2,2t+1))$ and $d(u,(2n-2,2t+3)) = d(v,(2n-2,2t+3))$, a contradiction.\\
\textbf{Case 2:} $u$ and $v$ are the endpoints of the same obtuse line.\\
\textbf{Case 2.1:}
$u=(2t+1,0)$ and $v=(0,2t+1)$, $ t\in [\![0,m-2]\!].$ \\
Then  $d(u,(2t+3,0)) = d(v,(2t+3,0))$ and $d(u,(2t+3,2)) = d(v,(2t+3,2))$, a contradiction.\\
\textbf{Case 2.2:}
$u=(2n+t-1,t)$ and $v=(t,2n+t-1)$, $ t\in $[\![$0,1$]\!]$ $. \\
We get $d(u,(2n+t-3,t)) = d(v,(2n+t-3,t))$ and $d(u,(2n+t-1,t+2)) = d(v,(2n+t-1,t+2))$, a contradiction.\\ 
\textbf{Case 2.3:}
$u=(2n,2t+1)$ and $v=(2t+1,2m)$, $ t\in [\![0,m-1]\!].$ \\
We have $d(u,(2t-1,2m-2)) = d(v,(2t-1,2m-2))$ and $d(u,(2t-1,2m)) = d(v,(2t-1,2m))$, a contradiction.\\ 
\textbf{Case 3:} $u$ and $v$ are the endpoints of a same acute line.\\
\textbf{Case 3.1:}
$u=(0,2t+1)$ and $v=(2m-2t-1,2m)$, $ t\in [\![1,m-1]\!].$ \\
Then $d(u,(2m-2t+1,2m)) = d(v,(2m-2t+1,2m))$ and $d(u,(2m-2t+1,2m-2)) = d(v,(2m-2t+1,2m-2))$, a contradiction.\\
\textbf{Case 3.2:}
$u=(t,1-t)$ and $v=(2m+t-1,2m-t)$, $ t\in [\![0, 1]\!].$ \\
We have $d(u,(t,3-t)) = d(v,(t,3-t))$ and $d(u,(t+2,1-t)) = d(v,(t+2,1-t))$, a contradiction.\\
\textbf{Case 3.3:}
$u=(2t+1,0)$ and $v=(2m,2m-2t-1)$, $ t\in [\![1,n-1]\!].$\\
We get $d(u,(2t-1,0)) = d(v,(2t-1,0))$ and $d(u,(2t-1,2)) = d(v,(2t-1,2))$, a contradiction.\\
Therefore, we conclude that $ \dim( VG^1_{m,n}) >2$.
\end{proof}
 The highlighted points in Figure \ref{F4} are the vertices that are not resolved by these two points $(1,0)$ and $(2n-1,0)$. Also, similar representations are given in similar colours. The following lemma demonstrates the usefulness of these points.

	\begin{figure}[H] 
	\centering
	\subfloat[]{\includegraphics[scale=.50]{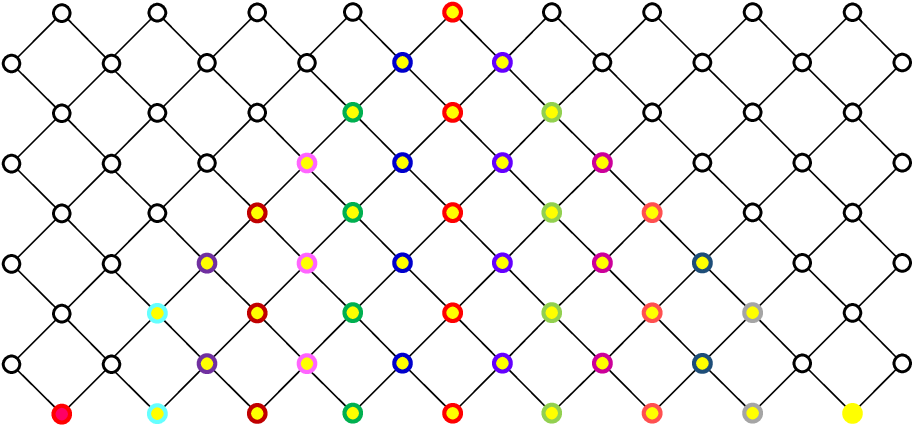}} 
	\quad 
	\subfloat[]{\includegraphics[scale=.50]{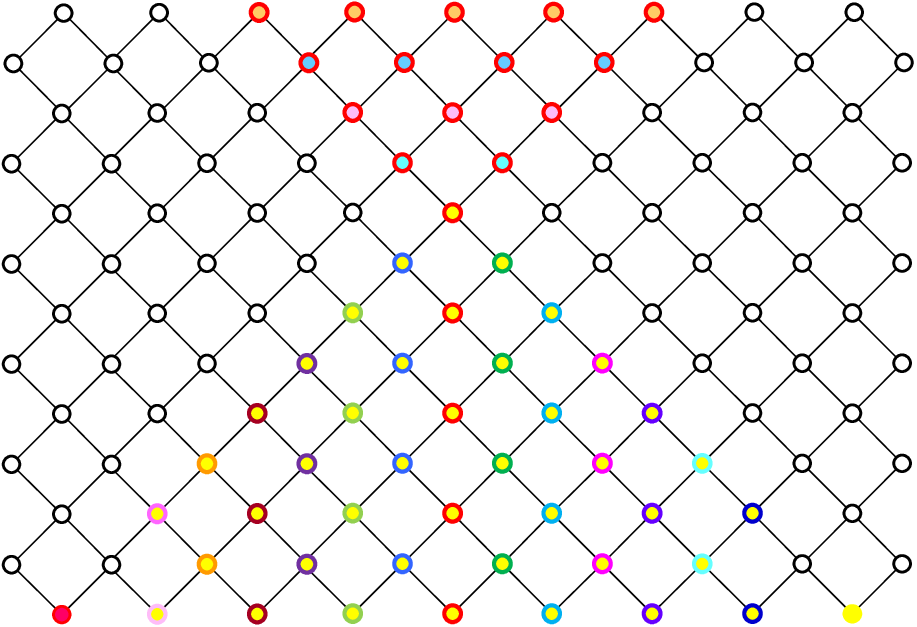}}
	\quad \quad \quad \quad \quad 
	\caption{(a) $VG^1_{4,9}$; (b) $VG^1_{6,9}$  } 
 \label{F4}
\end{figure}

\begin{lem} \label{lem2}
  For the graph $VG^1_{m,n}$ where $ 2 \leq n \leq 2m + 1$, the intersection $N_{r_1}((1,0)) \cap N_{r_2}((2n-1,0))$ is non-empty.
\end{lem}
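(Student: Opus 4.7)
The plan is to exhibit concrete vertices of $VG^1_{m,n}$ that lie at prescribed distances from the two reference points $(1,0)$ and $(2n-1,0)$, thereby showing that for suitably chosen $r_1,r_2$ the intersection $N_{r_1}((1,0))\cap N_{r_2}((2n-1,0))$ is non-empty; the same construction will also produce the coloured vertices displayed in Figure~\ref{F4}, which form unresolved pairs under the landmark set $\{(1,0),(2n-1,0)\}$.

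The first step is to set up a usable distance formula on $VG^1_{m,n}$. Because every edge changes each coordinate by $\pm 1$, passing to the rotated coordinates $(s,t)=(i+j,\,i-j)$ turns $VG^1_{m,n}$ into a parallelogram-shaped induced subgraph of an axis-aligned grid (with step size $2$), and the path-length in this rotated grid is $|s_1-s_2|/2+|t_1-t_2|/2$, which equals $\max(|i_1-i_2|,|j_1-j_2|)$ in the original coordinates, provided the straight-line path in the rotated picture remains inside the parallelogram. The hypothesis $n\leq 2m+1$ ensures enough vertical room for the paths used in the construction below to avoid the top and bottom boundaries, so no detour penalty is incurred.

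With this in hand I would fix a target distance pair, the most natural choice being $r_1=r_2=r$ for some $r$ in the central regime (for instance $r$ close to $n-1$), and describe the full intersection using the Chebyshev formula: for a vertex $(a,b)$ with $1\le a\le 2n-1$ and $b\ge 0$ one has $d((a,b),(1,0))=\max(a-1,\,b)$ and $d((a,b),(2n-1,0))=\max(2n-1-a,\,b)$. Equating these to $r_1$ and $r_2$ determines an explicit lattice region; superimposing the parity conditions of the two vertex types of $VG^1_{m,n}$ (the first-type $(2i,2j+1)$ and second-type $(2i+1,2j)$) and the bound $n\le 2m+1$ then yields at least one valid vertex, providing the required non-empty intersection.

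The main obstacle will be the interplay between parity, the top boundary, and the corner case $n=2m+1$. For $n$ near $2m+1$ the vertical room is just barely enough, so the candidate must be chosen with $b$ not exceeding $2m$ and with parity matching one of the two vertex types, and one must double-check that the Chebyshev-length path actually stays inside the parallelogram (so that the distance formula is not inflated by a forced detour around the boundary). Once such a candidate is pinned down and its distances to $(1,0)$ and $(2n-1,0)$ are verified against the chosen $r_1,r_2$, the intersection is non-empty by construction, completing the proof.
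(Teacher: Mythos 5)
Your rotated-coordinate observation is correct and is actually a cleaner foundation than what the paper does: since every edge changes both coordinates by exactly one, the distance in $VG^1_{m,n}$ is the Chebyshev distance $\max(|i_1-i_2|,|j_1-j_2|)$ (one spare row or column always suffices for the necessary zigzag, so this holds for all $m,n\ge 1$, not only under $n\le 2m+1$; that hypothesis is instead what keeps your central witnesses such as $(n,n-1)$ below the top boundary $j\le 2m$). The paper never states this formula; it instead enumerates the spheres $N_{r}((1,0))$, $N_{r}((2n-1,0))$ segment by segment, and your formula reproduces exactly those lists, so on this point your route is a genuine simplification.

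The gap is in what you propose to conclude. Exhibiting one vertex at equal distance $r_1=r_2=n-1$ from the two landmarks proves only the existential reading of the statement, which is nearly trivial (and note that the literal unquantified statement is false, e.g.\ $N_1((1,0))\cap N_1((2n-1,0))=\emptyset$ for $n\ge 3$, so some reinterpretation is forced). What the paper's proof actually delivers, and what Lemma~\ref{lem3} consumes, is a complete description of $N_{r_1}((1,0))\cap N_{r_2}((2n-1,0))$ for \emph{every} pair $(r_1,r_2)$: the intersection is non-empty only when $r_2=2n-r_1-2$ or $r_2=r_1$, and in those cases it equals the explicit sets $A$ and $B$ (with $B=\emptyset$ exactly when $n=2m+1$). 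Lemma~\ref{lem3} then verifies that the third landmark $(2m+1,2m)$ assigns distinct distances to the vertices within each such $A\cup B$; without the full inventory that verification cannot be carried out. So you should push your Chebyshev formula one step further: solve $\max(|a-1|,b)=r_1$ and $\max(|2n-1-a|,b)=r_2$ simultaneously for all admissible $(r_1,r_2)$, record the resulting one-parameter families of solutions (these are precisely $A$ and $B$), and only then is the lemma proved in the form the rest of the paper needs.
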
 
\begin{proof}

\textbf{Case 1:}
Let $r_1=2\nu-1$ and $r_2=2\gamma-1$, where $\nu, \gamma \in [\![1,n]\!].$ \\
\textbf{Case 1.1:} $\nu,  \gamma \in [\![1,m]\!]$.\\
$N_{r_1}((1,0)) = S_1 \cup S_2$, where 
$S_1= \{ (2t , r_1) : \enspace  t \in [\![0,\frac{r_1-1}{2}]\!] $\} and \\ 
$S_2 =\{ (r_1+
1 , r_1-2t) : \enspace  t \in [\![0,\frac{r_1-1}{2}]\!] $\}\\
$N_{r_2}((2n-1,0)) = S_1 \cup S_2$, where 
$S_1= \{ (2n-2t , r_2) : \enspace  t \in [\![0,\frac{r_2-1}{2}]\!]\}$ and \\
$S_2= \{ (2n-r_2-1 , r_2-2t) : \enspace  t \in [\![0,\frac{r_2-1}{2}]\!]\}$\\
\textbf{Case 1.2:} $\nu,  \gamma \in [\![m+1,n]\!].$\\
 $N_{r_1}((1,0)) = \{(r_1+1 , 2m-2t+1):   t \in [\![1,m]\!]\}$.\\
$N_{r_2}((2n-1,0)) = \{(2n-r_2-1 , 2m-2t+1):  t \in [\![1,m] \!]\}$\\ 
$N_{r_1}((1,0)) \cap N_{r_2}((2n-1,0))= A \cup B $  and  $ r_1=2\nu-1$, where $\nu \in [\![2,n-2]\!]$ 
$$A=\left\{
\begin{array}{ll}
\{(r_1+1 , r_1-2t): &  t \in [\![0,\frac{r_1-1}{2}]\!]\}   , \enspace \text{if}   \enspace  \nu \in [\![1,\lceil \frac{n-3}{2}\rceil]\!] ; \enspace r_2=2n-r_1-2\\
\{(r_1+1 , 2n-r_1-2t-2): &  t \in [\![0, \frac{2n-r_1-3}{2}]\!]\}, \enspace \text{if}      \enspace  \nu \in [\![\frac{n-1}{2},n-3]\!] ; \enspace r_2=2n-r_1-2\\ 
\end{array}
\right.$$
$$B=\left\{
\begin{array}{ll}
 \emptyset : &  n=2m+1\\
\{(2n-r_1+2t-1 , r_1): & \enspace t \in [\![0,r_1-n+1]\!]\}  , \enspace \text{if} \enspace n < 2m+1, \enspace   \nu \in [\![\frac{n-1}{2},m-1]\!] ; \enspace r_2=r_1\\ 
\end{array}
\right.$$
\textbf{Case 2:} $r_1=2\nu$  and $r_2=2\gamma$, where $\nu, \gamma \in [\![1,n-1]\!].$ \\
\textbf{Case 2.1:} $\nu , \gamma \in [\![1,m]\!]$.\\
 $N_{r_1}((1,0)) = S_1 \cup S_2$, where 
$S_1= \{(2t+1 , r_1) : \enspace  t \in [\![0,\frac{r_1}{2}]\!]\}$ and\\
$S_2= \{(r_1+1 , r_1-2t-2) : \enspace  t \in [\![0,\frac{r_1-2}{2}]\!]\}$\\
$N_{r_2}((2n-1,0)) = S_1 \cup S_2$, where 
$S_1= \{(2n-2t-1 , r_2) : \enspace  t \in [\![0,\frac{r_2-2}{2}]\!] \}$ and\\
$S_2= \{(2n-r_2-1 , r_2-2t) : \enspace  t \in [\![0,\frac{r_2}{2}]\!] \}$\\
\textbf{Case 2.2:} $\nu, \gamma \in [\![m+1,n-1]\!]$.\\
$N_{r_1}((1,0)) = \{(r_1+1 , 2m-2t):   t \in [\![0,m]\!]$\} \\
$N_{r_2}((2n-1,0)) =  \{(2n-r_2-1, 2m-2t):  t \in [\![0,m]\!]\}$\\
$N_{r_1}((1,0)) \cap N_{r_2}((2n-1,0))= A \cup B $  and  $r_1=2\nu$, where $ \nu \in [\![1,n-2]\!]$ 
$$A=\left\{
\begin{array}{ll}
\{(r_1+1 , r_1-2t): &  t \in [\![0,\frac{r_1}{2}]\!]\}  , \enspace \text{if} \enspace  \nu \in [\![1,\lfloor \frac{n-1}{2}\rfloor ]\!] ; \enspace r_2=2n-r_1-2\\
\{(r_1+1 , 2n-r_1-2t-2): &  t \in [\![0,\frac{2n-r_1-2}{2}]\!]\}, \enspace \text{if} \enspace  \nu \in [\![\lfloor \frac{n+1}{2}\rfloor,n-2]\!] ; \enspace r_2=2n-r_1-2\\ 
\end{array}
\right.$$
$$B=\left\{
\begin{array}{ll}
 \emptyset : &  n=2m+1\\
\{(2n-r_1+2t-1, r_1): &   , \enspace t \in [\![0,r_1-n+1]\!]\}, \enspace \text {if} \enspace  n < 2m+1, \enspace   \nu \in [\![\lfloor \frac{n+1}{2}\rfloor,m]\!]  ; \enspace r_2=r_1\\ 
\end{array}
\right.$$\\
In both cases, the intersection $N_{r_1}((1,0)) \cap N_{r_2}((2n-1,0))$ is non-empty, as shown by the inclusion of valid points in sets $A$ and $B$. Thus, we have proved that $N_{r_1}((1,0)) \cap N_{r_2}((2n-1,0))$ is indeed non-empty.
\end{proof}

 \begin{lem}\label{lem3} 
If $2 \leq n\leq 2m+1$ and $m \neq n$, then $ \dim( VG^1_{m,n})\leq 3$.
 \end{lem}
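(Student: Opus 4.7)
The plan is to exhibit an explicit resolving set of size three and verify it with the help of Lemma~\ref{lem2}. In light of Lemma~\ref{lem2}, the two landmarks $(1,0)$ and $(2n-1,0)$ already separate most of the graph; what remains is a cleanly catalogued family of ``twin'' vertices, so it is natural to take $\mathscr{R}=\{(1,0),(2n-1,0),w\}$ and choose a third landmark $w$ on a boundary opposite to the base edge. Good candidates are $w=(0,1)$, $w=(1,2m)$, or $w=(2n-1,2m)$; the correct choice is pinned down by the geometry once the confusion classes are examined.

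First, I would invoke Lemma~\ref{lem2} to enumerate all pairs of vertices that share the same pair of distances to $(1,0)$ and $(2n-1,0)$. These confusion classes are precisely the sets $A$ and $B$ described there: each $A$-class sits on a short vertical segment of the form $\{x=r_1+1\}$, while each $B$-class sits on a horizontal segment on the top boundary $\{y=r_1\}$, indexed by a parameter~$t$. Two vertices of $VG^1_{m,n}$ have identical two-coordinate codes with respect to the first two landmarks if and only if they belong to a common $A$-class or a common $B$-class for some admissible $(r_1,r_2)$.

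Second, for each confusion class I would compute $d(x,w)$ for the vertices $x$ in the class and show that this distance is a strictly monotone function of the indexing parameter $t$. Since edges of $VG^1_{m,n}$ are purely diagonal, the distance from a corner-adjacent landmark to a vertex on a straight horizontal or vertical strip is a controlled piecewise-linear function of the strip parameter; hence distinct values of $t$ force distinct distances to $w$. Finally, I would note that vertices lying in different confusion classes already differ in the first two coordinates of their code, so injectivity is inherited globally and $\mathscr{R}$ resolves $VG^1_{m,n}$.

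The main obstacle is not the core idea, which is short, but the bookkeeping: Lemma~\ref{lem2} branches on the parity of $(r_1,r_2)$ and on whether the radii lie below or above $m$, giving several sub-families of classes. The tightest subcase is $n=2m+1$, where $B=\emptyset$ and only the $A$-classes on the diagonals remain; here the choice of $w$ is most constrained, and one must verify that $w$ does not itself lie on (or symmetrically placed with respect to) any of those diagonals. Establishing the required monotonicity of $d(\cdot,w)$ uniformly across every subcase and every value of $r_1$ is the crux of the proof.
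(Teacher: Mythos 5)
Your outline coincides with the paper's strategy: keep $(1,0)$ and $(2n-1,0)$, use Lemma~\ref{lem2} to catalogue the residual confusion classes $A$ and $B$, and kill each class with a third landmark by showing the distance to it is strictly monotone in the class parameter $t$. The gap is that you never actually produce a third landmark that works, and the three candidates you float all fail. The paper takes $w=(2m+1,2m)$, and the choice of $x$-coordinate $2m+1$ is not incidental. Since every edge is diagonal, the (unobstructed) distance between two vertices is $\max(|\Delta x|,|\Delta y|)$, so a landmark separates a vertical $A$-class $\{(r_1+1,\,r_1-2t)\}$ only if the $y$-displacement dominates the $x$-displacement for every $t$; with $w=(2m+1,2m)$ one gets $|\Delta x|=|2m-r_1|\le 2m-r_1+2t=|\Delta y|$, so $d(\cdot,w)=2m-r_1+2t$ is strictly increasing in $t$. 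Your candidates do not have this property: for $w=(0,1)$ the distance to $(r_1+1,r_1-2t)$ is $\max(r_1+1,\,r_1-2t-1)=r_1+1$, constant on the whole class; for $w=(1,2m)$ it equals $\max(r_1,\,2m-r_1+2t)=r_1$ for all $t\le r_1-m-1$, so the class is not separated once $r_1\ge m+2$; and for $w=(2n-1,2m)$ it equals $\max(2n-r_1-2,\,2m-r_1+2t)=2n-r_1-2$ for all $t\le n-m-1$, which collapses as soon as $n\ge m+2$.

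So the missing step is precisely the one you defer to ``the geometry'': identifying a vertex whose distance function is monotone on \emph{every} $A$- and $B$-class simultaneously. That is the substantive content of the lemma, not bookkeeping, and it forces the third landmark's $x$-coordinate to be tied to the height $2m$ of the grid rather than to a corner. Once $w=(2m+1,2m)$ is fixed, the rest of your plan (distinct classes are already separated by the first two coordinates; within a class, monotonicity in $t$ gives injectivity) matches the paper's verification, which proceeds by listing the spheres $N_{r_3}(w)$ case by case and checking that the triple intersection with the sets from Lemma~\ref{lem2} never contains two vertices.
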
 
\begin{proof}
We claim that $\{(1,0),(2n-1,0),(2m+1,2m)\}$ forms a  resolving set of the graph.\\
\textbf{Case 1:} $n=2m+1.$\\
\textbf{Case 1.1:} $r_3=2\nu-1$, where $\nu \in [\![1,\frac{n+1}{2}]\!].$\\
\textbf{Case 1.1.1:} $\nu =1. $\\
$N_{r_3}((2m+1,2m)) = S_1 \cup S_2$, where 
$S_1= \{(2m+r_3+1, 2m-2t-1):   t \in [\![0,\frac{r_3-1}{2}]\!]\}$ and 
$S_2= \{(2m-r_3+1, 2m-2t-1):  t \in [\![0,\frac{r_3-1}{2}]\!]\}$\\
\textbf{Case 1.1.2:} $\nu \in [\![2,\frac{n-1}{2}]\!].$\\
$N_{r_3}((2m+1,2m)) = S_1 \cup S_2\cup S_3$, where 
$S_1= \{(2m+2t-r_3+1, 2m-r_3) : \enspace  t \in [\![1,r_3-1]\!]$,\\ 
$S_2= \{(2m+r_3+1, 2m-2t-1):   t \in [\![0,\frac{r_3-1}{2}]\!]$ and \\
$S_3= \{(2m-r_3+1, 2m-2t-1): t \in [\![0,\frac{r_3-1}{2}]\!]\}$ \\
\textbf{Case 1.1.3:} $\nu = \frac{n+1}{2}.$\\
 $N_{r_3}((2m+1,2m)) = S_1 \cup S_2$, where 
$S_1= \{(2m+r_3+1, 2m-2t-1): t \in [\![0,m-1]\!]\} $ and\\ 
$S_2= \{(2m-r_3+1, 2m-2t-1):   t \in [\![0,m-1]\!]\}$\\ 
\textbf{Case 1.2:} $r_3=2\nu$, where $\nu \in [\![1,\frac{n-1}{2}]\!].$\\
$N_{r_3}((2m+1,2m)) = S_1 \cup S_2\cup S_3$, where 
$S_1= \{(2m+2t-r_3+1, 2m-r_3) : \enspace  t \in [\![1,r_3-1\!] \}$,\\
$S_2= \{(2m+r_3+1, 2m-2t) : \enspace  t \in [\![0,\frac{r_3}{2}]\!]\}$ and 
$S_3= \{(2m-r_3+1, 2m-2t) : \enspace  t \in [\![0,\frac{r_3}{2}]\!]\}$\\
\textbf{Case 2:}  $n < 2m+1 , n\neq m.$ \\
\textbf{Case 2.1:} $r_3=2\nu-1$, where $\nu \in [\![1,m+1]\!].$ \\
\textbf{Case 2.1.1:} $\nu=1$.\\
$N_r((2m+1,2m)) = S_1 \cup S_2$, where 
$S_1= \{(2m+r_3+1, 2m-2t-1) : \enspace  t \in [\![0,\frac{r_3-1}{2}]\!]\}$ and 
$S_2= \{(2m-r_3+1, 2m-2t-1):  t \in [\![0,\frac{r_3-1}{2}]\!]\}$\\
\textbf{Case 2.1.2:} $\nu \in [\![2,n-m]\!].$\\
$N_r((2m+1,2m)) = S_1 \cup S_2\cup S_3$, where 
$S_1= \{(2m+2t-r_3+1, 2m-r_3):  t \in [\![1,r_3-1]\!]\},$\\
$S_2= \{(2m+r_3+1, 2m-2t-1) : \enspace  t \in [\![0,\frac{r_3-1}{2}]\!]\}$ and 
$S_3= \{(2m-r_3+1, 2m-2t-1):  t \in [\![0,\frac{r_3-1}{2}]\!]\}$  \\
\textbf{Case 2.1.3:} $\nu \in [\![n-m+1,m]\!].$\\
$N_r((2m+1,2m)) = S_1 \cup S_2$, where 
$S_1= \{(2m+2t-r_3+1, 2m-r_3): t \in [\![1,\frac{r_3-1}{2}+n-m]\!] \}$ and\\
$S_2= \{(2m-r_3+1, 2m-2t-1): t \in [\![0,\frac{r_3-1}{2}]\!] \}$\\
\textbf{Case 2.1.4:} $\nu = m+1.$\\
$N_r((2m+1,2m)) = \{(2m-r_3+1, 2m-2t-1):  t \in [\![0,m-1]\!]\}$\\
\textbf{Case 2.2:} $r_3=2\nu$, where $\nu \in [\![1,m]\!].$ \\
\textbf{Case 2.2.1:} $\nu \in [\![1,n-m-1]\!].$\\
$N_r((2m+1,2m)) = S_1 \cup S_2\cup S_3$, where 
$S_1= \{(2m+2t-r_3+1, 2m-r):   t \in [\![1,r_3-1]\!]\},$\\
$S_2= \{(2m+r_3+1, 2m-2t) : \enspace  t \in [\![0,\frac{r_3}{2}]\!]\}$ and
$S_3= \{(2m-r_3+1, 2m-2t) : \enspace  t \in [\![0,\frac{r_3}{2}]\!]\}$\\
\textbf{Case 2.2.2:} $\nu \in [\![n-m,m]\!].$\\
$N_r((2m+1,2m)) = S_1 \cup S_2$, where 
$S_1= \{(2m+2t-r_3+1, 2m-r):   t \in [\![1,\frac{r_3}{2}+n-1-m]\!]\}$ and \\ 
$S_2= \{(2m-r_3+1, 2m-2t) : \enspace  t \in [\![0,\frac{r_3}{2}]\!]\}$\\
By  Lemma \ref{lem2},
in every instance mentioned above, $N_{r_1}(1,0)\cap N_{r_2}(2n+1,0)\cap N_{r_3}(2m+1,2m)$ is empty. Thus, $\{(1,0),(2n-1,0),(2m+1,2m)\}$ forms a resolving set. Hence,  $ \dim( VG^1_{m,n})\leq 3$.
\end{proof}
\begin{thm}\label{thm2}
 For $2\leq n \leq  2m+1$, $\dim (VG^1_{m,n}) =3$.
\end{thm}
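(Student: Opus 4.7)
The plan is to bracket $\dim(VG^1_{m,n})$ between $3$ and $3$ by combining the three preceding lemmas and supplying one extra direct verification for the diagonal case $m = n$, which the upper bound Lemma \ref{lem3} explicitly excludes.

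First, I would establish the lower bound $\dim(VG^1_{m,n}) \geq 3$. If $m \neq n$, Lemma \ref{lem1} rules out the existence of a two-element resolving set; if $m = n$, Lemma \ref{lemma1} rules out the same. Together with the trivial observation that $\dim \geq 2$ for any connected graph on more than two vertices (which clearly holds in the range $2 \leq n \leq 2m+1$), this yields $\dim(VG^1_{m,n}) \geq 3$ throughout.

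Next, I would establish the matching upper bound $\dim(VG^1_{m,n}) \leq 3$. For $m \neq n$, Lemma \ref{lem3} already exhibits the explicit three-element resolving set $\{(1,0),(2n-1,0),(2m+1,2m)\}$, so nothing further is required in that subcase. For $m = n$, notice that $(2m+1,2m)$ is not even a vertex of $VG^1_{n,n}$ (the index $i = n$ is inadmissible in the parametrization $(2i+1,2j)$), so a replacement third landmark must be chosen. A natural candidate is $\{(1,0),(2n-1,0),(2n-1,2m)\}$, and I would verify it is resolving by imitating the distance-neighborhood bookkeeping of Lemmas \ref{lem2} and \ref{lem3}. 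Concretely, for each admissible pair $(r_1,r_2)$ the intersection $N_{r_1}((1,0)) \cap N_{r_2}((2n-1,0))$ is already described by Lemma \ref{lem2}, and one checks that the sphere of every admissible radius $r_3$ around the third landmark meets this intersection in at most one point.

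The principal obstacle is this $m = n$ verification. The symmetry of $VG^1_{n,n}$ under the swap $(x,y) \mapsto (y,x)$ produces additional distance coincidences beyond those encountered in Lemma \ref{lem3}, so the case split by parity of $r_3$ and by which segment of $[\![1,2n]\!]$ each radius occupies must be redone from scratch, and the third landmark has to be positioned so that it breaks these extra symmetries. Once that enumeration is complete and shown to force empty triple intersections, combining $\dim(VG^1_{m,n}) \geq 3$ with $\dim(VG^1_{m,n}) \leq 3$ yields the claimed equality $\dim(VG^1_{m,n}) = 3$ in the full range $2 \leq n \leq 2m+1$.
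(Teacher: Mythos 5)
Your proposal follows essentially the same route as the paper: the lower bound comes from Lemmas \ref{lem1} and \ref{lemma1}, the upper bound from Lemma \ref{lem3} when $m\neq n$, and the diagonal case $m=n$ is handled by swapping in a different third landmark whose verification is asserted to be analogous to Lemma \ref{lem3}. The paper's replacement landmark is $(2m,2m-1)$ where yours is $(2n-1,2m)$ (the two are exchanged by the diagonal automorphism of $VG^1_{n,n}$), and, like you, the paper leaves the detailed check for $m=n$ to an ``analogous'' argument rather than carrying it out.
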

\begin{proof}
 By  Lemmas \ref{lem1} and \ref{lem3}, we have $\dim (VG^1_{m,n}) =3$ if $n \neq m$.\\
 When $n = m$ and $n > 1$, by Lemma \ref{lemma1}, $\dim (VG^1_{m,n})\geq3$  and $\{(1,0),(2n-1,0),(2m,2m-1)\}$ forms a resolving set. The proof is analogous to those of Lemma \ref{lem3}, with some adjustments for the specific values of $n$ and $m$. Hence $\dim (VG^1_{m,n}) =3$. 
\end{proof}

 \begin{lem} \label{lem4}
If $n\leq 2m+1$, then $ \dim( VG^2_{m,n}) >2$.
 \end{lem}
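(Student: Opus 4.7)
The plan is to mimic the case analysis used in Lemmas \ref{lem1} and \ref{lemma1} for the Type~I grid, adapting it to the different sublattice structure of $VG^2_{m,n}$. Suppose for contradiction that $\dim(VG^2_{m,n})=2$ and let $\{u,v\}$ be a resolving basis. By Theorem \ref{md2}(i), both $\deg(u)$ and $\deg(v)$ are at most $3$. Since the only degrees occurring in $VG^2_{m,n}$ are $1$, $2$, and $4$, both $u$ and $v$ must have degree $1$ or $2$. A direct check of the adjacency rule $|i_1-i_2|=|j_1-j_2|=1$ shows that every odd-odd vertex has degree $4$, so $u$ and $v$ must be even-even boundary vertices: either one of the four corners $(0,0)$, $(0,2m)$, $(2n-2,0)$, $(2n-2,2m)$ of degree $1$, or one of the lateral points of the form $(0,2j)$, $(2n-2,2j)$, $(2i,0)$, $(2i,2m)$ of degree $2$.

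I would then split the candidate pairs $\{u,v\}$ into three geometric families, exactly as in Lemma \ref{lem1}. First, $u$ and $v$ may lie on the same boundary side of the grid, giving four subcases, one for each side. Second, $u$ and $v$ may be the endpoints of a common obtuse Villarceau line, with subcases depending on which corner region that line emanates from. Third, $u$ and $v$ may be the endpoints of a common acute Villarceau line, again divided into the natural subcases. In every subcase I would exhibit an explicit pair of distinct vertices $p,q \in V(VG^2_{m,n})$ satisfying $d(u,p)=d(u,q)$ and $d(v,p)=d(v,q)$. These witnesses are chosen as two vertices symmetric across the line $uv$, mirroring the pattern used in the Type~I proofs, where one typically picks $p,q$ among the immediate neighbours of $u$ together with their mirror images near $v$. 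The hypothesis $n\le 2m+1$ enters precisely in the acute-line subcases: it guarantees that the grid is wide enough for the symmetric witnesses opposite the diagonal $uv$ to still lie inside the vertex set.

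The main obstacle I anticipate is bookkeeping. In $VG^2_{m,n}$ the four corners are degree-$1$ sinks rather than degree-$2$ vertices as in Type~I, and the odd-odd/even-even sublattice is shifted by one unit compared with Type~I, so the parities and index ranges of acute and obtuse endpoints shift. Consequently I cannot simply transcribe the witness formulas from Lemma \ref{lem1}; instead I have to re-derive, for $VG^2_{m,n}$, which pairs of boundary vertices are joined by a common obtuse or acute diagonal, and then check in each subcase that the proposed witness vertices $p,q$ actually belong to $V(VG^2_{m,n})$. Once these coordinate adjustments are handled, each subcase reduces to a short direct computation of four shortest-path distances along a single Villarceau diagonal, yielding the required contradiction. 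Exhausting all subcases shows that no two-element set can resolve $VG^2_{m,n}$, and hence $\dim(VG^2_{m,n})>2$.
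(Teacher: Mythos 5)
Your overall strategy is exactly the paper's: assume a two-element resolving set $\{u,v\}$, use Theorem~\ref{md2} to force $u$ and $v$ into the small-degree (even--even boundary) vertices, split the candidate pairs into ``same boundary side'', ``same obtuse line'' and ``same acute line'', and in each subcase exhibit two vertices receiving identical codes. The explicit degree bookkeeping (degrees $1,2,4$, so $u,v$ lie on the even--even boundary) is a nice justification that the paper leaves implicit, and the witness pattern you describe (a vertex and its mirror image across the line $uv$, both at equal distance from $u$ and from $v$) is precisely what the paper's subcases do.

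There is, however, one genuine gap as written: you invoke only part (i) of Theorem~\ref{md2}. The degree restriction alone does not reduce the candidate pairs to your three geometric families. For instance, $u=(0,0)$ and $v=(2n-2,2m)$ are both low-degree boundary vertices but in general lie neither on a common side nor on a common acute or obtuse diagonal; the same goes for two non-consecutive vertices of the same side, such as $(0,0)$ and $(4,0)$. What makes the trichotomy exhaustive is part (ii), the existence of a \emph{unique} shortest $u,v$-path: viewing $VG^2_{m,n}$ as an ordinary grid in the rotated coordinates $\left(\tfrac{i+j}{2},\tfrac{i-j}{2}\right)$, two boundary vertices admit a unique geodesic only when they are consecutive on one side or are joined by a single Villarceau diagonal, and every other pair is excluded outright. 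The paper does cite this condition (though it also leaves the exhaustiveness check implicit), so your argument is repaired simply by adding it; without it the case analysis does not cover all admissible $\{u,v\}$ and the proof is incomplete. A minor further point: the hypothesis $n\le 2m+1$ does not visibly enter the witness constructions in the paper's proof (the subcase index ranges simply become empty when they do not apply), so your prediction that it is needed to keep the acute-line witnesses inside the vertex set is not borne out, though this does not affect the validity of the plan.
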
 
\begin{proof}
Suppose, for contradiction, that $\{ u,v\}$ is a resolving set for $VG^2_{m,n}$. By Theorem ~\ref{md2}, there is a unique shortest $u,v$-path and $\deg_G(u)\le 3$, $\deg_G(v)\le 3$. We consider the below cases:\\ 
\textbf{Case 1}: $u$ and $v$ are on the boundary.\\
\textbf{Case 1.1}:$u=(2t,0)$ and $v=(2t+2,0) $, $ t\in [\![0,n-2]\!]$.\\
Then $d(u,(2t,2)) = d(v,(2t,2))$ and $d(u,(2t+2,2)) = d(v,(2t+2,2))$, a contradiction.\\
\textbf{Case 1.2:}
$u=(0,2t)$ and $v=(0,2t+2)$, $ t\in [\![0,m-1]\!].$ \\
We get $d(u,(2,2t)) = d(v,(2,2t))$ and $d(u,(2,2t+2)) = d(v,(2,2t+2))$, a contradiction.\\
\textbf{Case 1.3:}
$u=(2t,2m)$ and $v=(2t+2,2m)$, $ t\in [\![0, n-2]\!].$ \\
We have $d(u,(2t,2m-2)) = d(v,(2t,2m-2))$ and $d(u,(2t+2,2m-2)) = d(v,(2t+2,2m-2))$, a contradiction.\\
\textbf{Case 1.4:}
$u=(2n-2,2t)$ and $v=(2n-2,2t+2)$, $ t\in [\![0,m-1]\!].$ \\
Now, $d(u,(2n-4,2t)) = d(v,(2n-4,2t))$ and $d(u,(2n-4,2t+2)) = d(v,(2n-4,2t+2))$, a contradiction.\\
\textbf{Case 2:} $u$ and $v$ are the endpoints of obtuse lines.\\
\textbf{Case 2.1:}
$u=(2t,0)$ and $v=(0,2t)$, $ t\in [\![1, m]\!].$ \\
Then $d(u,(2t+2,0)) = d(v,(2t+2,0))$ and $d(u,(2t+2,2)) = d(v,(2t+2,2))$, a contradiction.\\
\textbf{Case 2.2:}
$u=(2t,0)$ and $v=(2t-2m,2m)$, $ t\in [\![m+1, n-2]\!].$ \\
We have $d(u,(2t+2,0)) = d(v,(2t+2,0))$ and $d(u,(2t+2,2)) = d(v,(2t+2,2))$, a contradiction.\\ 
\textbf{Case 2.3:}
$u=(2n-2m+2t-2,2m)$ and $v=(2n-2,2t)$, $ t\in [\![0, m-1]\!].$ \\
Now, $d(u,(2n-2m+2t-4,2m)) = d(v,(2n-2m+2t-4,2m))$ and $d(u,(2n-2m+2t-4,2m-2)) = d(v,(2n-2m+2t-4,2m-2))$, a contradiction.\\ 
\textbf{Case 3:} $u$ and $v$ are the endpoints of acute lines.\\
\textbf{Case 3.1:}
$u=(0,2t)$ and $v=(2m-2t,2m)$, $ t\in [\![0,m-1]\!].$ \\
We get $d(u,(2m-2t+2,2m)) = d(v,(2m-2t+2,2m))$ and $d(u,(2m-2t+2,2m-2)) = d(v,(2m-2t+2,2m-2))$, a contradiction.\\
\textbf{Case 3.2:}
$u=(2t+2,0)$ and $v=(2m+2t+2,2m)$, $ t\in [\![0,n-m-2]\!].$ \\
Now, $d(u,(2t,0)) = d(v,(2t,0))$ and $d(u,(2t,2)) = d(v,(2t,2))$, a contradiction.\\
\textbf{Case 3.3:}
$u=(2n-2m+2t,0)$ and $v=(2n-2,2m-2t-2)$, $ t\in [\![0,m-2]\!].$ \\
Then $d(u,(2n-2m+2t-2,0)) = d(v,(2n-2m+2t-2,0))$ and $d(u,(2n-2m+2t-2,2)) = d(v,(2n-2m+2t-2,2))$, a contradiction.\\
We conclude that $ \dim( VG^2_{m,n}) >2$.
\end{proof}
\begin{lem} \label{lem5}
  For the graph $VG^2_{m,n}$ with $ n \leq 2m + 1$ , the intersection $N_{r_1}((0,0)) \cap N_{r_2}((2n-2,0))$ is non-empty.
\end{lem}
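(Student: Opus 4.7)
Here is my plan. The strategy is to follow the template of Lemma~\ref{lem2}: parameterise both spheres explicitly by cases and then exhibit a point in their intersection. In $VG^{2}_{m,n}$ each edge changes both coordinates by $\pm 1$, so, since $(0,0)$ is a corner of the bounding rectangle, the graph distance from $(0,0)$ to any admissible vertex $(a,b)$ (which necessarily satisfies $a\equiv b\pmod 2$) equals $\max(a,b)$ as long as $a\le 2n-2$ and $b\le 2m$. The resulting sphere is an $L$-shape,
\[
N_r((0,0))=\{(r,b):0\le b\le\min(r,2m),\ b\equiv r\pmod 2\}\cup\{(a,r):0\le a\le\min(r,2n-2),\ a\equiv r\pmod 2\},
\]
and $N_r((2n-2,0))$ is the mirror image of this set about the vertical line $a=n-1$.

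First I would observe that every vertex of $VG^{2}_{m,n}$ has both coordinates of the same parity, so a vertex common to both spheres forces $r_1\equiv r_2\pmod 2$. I would therefore split the argument into Case~1 ($r_1=2\nu-1$, $r_2=2\gamma-1$) and Case~2 ($r_1=2\nu$, $r_2=2\gamma$). Within each case, I would further split on whether $\nu,\gamma\le m$ (the sphere genuinely consists of a vertical leg $a=r_1$ and a horizontal leg $b=r_1$) or $\nu,\gamma\ge m+1$ (the horizontal leg overshoots the top boundary and collapses onto the segment $b=2m$). Each subcase yields a one-line formula for the sphere, in the same spirit as the formulas for $S_1$ and $S_2$ in the proof of Lemma~\ref{lem2}.

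The intersection then decomposes into at most two pieces. A \emph{top piece} on $b=2m$ arises when both spheres reach the top boundary; this is where the hypothesis $n\le 2m+1$ enters, guaranteeing that the vertical legs $a=r_1$ and $a=2n-2-r_2$ remain close enough for the top rows to overlap. A \emph{central piece} appears exactly when $r_1+r_2=2n-2$, namely the segment where the two vertical legs coincide (clipped to the admissible range). Exhibiting one explicit member in each piece — typically $(r_1,r_1)$, $(r_1,2m)$, or $(n-1,r_1)$ after parity adjustment — will finish the argument and produce the analogues of the sets $A$ and $B$ that appeared in Lemma~\ref{lem2}.

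The main obstacle, exactly as in Lemma~\ref{lem2}, is the bookkeeping of boundary subcases — the endpoints $\nu=1$, $\nu=m$, $\nu=\lfloor (n-1)/2\rfloor$, and $\nu=n-1$ — at which the $L$-shape degenerates into a single segment or a single point, so the formulas for $A$ and $B$ have to be adjusted by hand. Conceptually the argument is mechanical; the real work lies in verifying that in every subcase the parity and range conditions really do produce a lattice point lying in the admissible vertex set of $VG^{2}_{m,n}$.
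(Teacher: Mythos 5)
Your plan follows the paper's proof almost exactly: both arguments write out the two spheres case-by-case (the paper's lists $S_1\cup S_2$ are precisely the two legs of your $L$-shape, i.e.\ the level sets of $\max(a,b)$ clipped to the grid, with the horizontal leg collapsing once $r>2m$) and then present the intersection as a column piece $A$ (nonempty exactly when $r_1+r_2=2n-2$) together with a row piece $B$ (nonempty exactly when $r_1=r_2\geq n-1$, which is where the hypothesis $n\leq 2m+1$ is used). The one adjustment needed is that the second component lives on the shared horizontal row $b=r_1=r_2$ rather than specifically on $b=2m$; with that correction your representative points $(r_1,r_1-2t)$ and $(n-1,r_1)$ are exactly the paper's sets $A$ and $B$, and your explicit distance formula is a cleaner packaging of the same computation.
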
 
\begin{proof}
We analyze the problem by considering different cases for $r_1$ and $r_2$, which represent specific rows in the grid of the graph $VG^2_{m,n}$.\\
\textbf{Case 1:} $r_1$ and $r_2$ are odd integers.\\
$r_1=2\nu-1$  and  $r_2=2\gamma-1$, where $\nu, \gamma \in [\![1,n-1]\!].$ Then the neighbourhoods $N_{r_1}((0,0))$ and $N_{r_2}((2n-2,0))$ are as follows.\\
\textbf{Case 1.1:} $\nu =1$ and $\gamma =1.$\\
$N_{r_1}((0,0)) = \{(2t+1 , r_1) : \enspace  t \in [\![0,\frac{r_1-1}{2}]\!]\}$\\ 
$N_{r_1}((2n-2,0)) = \{(2n-2t-3 , r_2) : \enspace  i \in [\![0,\frac{r_2-1}{2}]\!]\}$\\
\textbf{Case 1.2:} $\nu, \gamma \in [\![2,m]\!].$\\
$N_{r_1}((0,0)) = S_1 \cup S_2$, in which
$S_1= \{(2t+1 , r_1) : \enspace  t \in [\![0,\frac{r_1-1}{2}]\!]\}$ and \\
$S_2= \{(r_1 , r_1-2t): t \in [\![1,\frac{r_1-1}{2}]\!]\}$\\
$N_{r_1}((2n-2,0)) = S_1 \cup S_2$, in which
$S_1= \{(2n-2t-3 , r_2) : \enspace  t \in [\![0,\frac{r_2-1}{2}]\!]\}$ and \\
$S_2= \{(2n-r_2-2 , r_2-2t):  t \in [\![1,\frac{r_2-1}{2}]\!]\}$\\
\textbf{Case 1.3:} $\nu, \gamma \in [\![m+1,n-1]\!].$\\
$N_{r_1}((0,0)) = \{ (r_1 , 2m-2t+1): t \in [\![1,m]\!]\}$\\
$N_{r_1}((2n-2,0)) = \{ (2n-r_2-2 , 2m-2t+1): t \in [\![1,m] \!]\}$\\
$N_{r_1}((0,0)) \cap N_{r_2}((2n-2,0))= A \cup B $ and $ r_1=2\nu+1$ where $\nu \in [\![1,n-3]\!]$ \\
where 
$$A=\left\{
\begin{array}{ll}
\{(r_1, r_1-2t): &  t \in [\![0,\frac{r_1-1}{2}]\!]\}, \enspace \text{if}  \enspace  \nu\in [\![1,\lceil\frac{n-3}{2}\rceil]\!] ; \enspace r_2=2n-r_1-2\\
\{(r_1, 2n-r_1-2t-2): &  t \in [\![0,\frac{2n-r_1-3}{2}]\!]\}, \enspace \text{if}    \enspace  \nu \in [\![\lceil\frac{n-1}{2}\rceil,n-3]\!] ; \enspace r_2=2n-r_1-2\\ 
\end{array}
\right.$$
$$B=\left\{
\begin{array}{ll}
 \emptyset : &  n=1+2m\\
\{(2n-r_1+2t-2, r_1): &   \enspace t\in [\![0,r_1-n+1]\!]\}, \enspace \text{if} \enspace n < 2m+1 ,\enspace   \nu \in [\![\lceil\frac{n-1}{2}\rceil ,m-1]\!]  ; \enspace r_2=r_1\\ 
\end{array}
\right.$$
\textbf{Case 2:} $r_1$ and $r_2$ are even integers.\\
$r_1=2\nu$ and  $r_2=2\gamma$,  where $\nu, \gamma \in [\![1,n-1]\!].$\\
\textbf{Case 2.1:} $\nu, \gamma \in [\![1,m]\!]$.\\
$N_{r_1}((0,0)) = S_1 \cup S_2$, where 
$S_1= \{(2t, r_1) : \enspace  t \in [\![0,\frac{r_1}{2}]\!]\}$ and \\
$S_2= \{(r_1 , r_1-2t-2):  t \in [\![0,\frac{r_1-2}{2}]\!]\}$\\
$N_{r_2}((2n-2,0)) = S_1 \cup S_2$, where 
$S_1= \{(2n-2t-2 , r_2) : \enspace  t \in [\![0,\frac{r_2}{2}]\!]\}$ and \\
$S_2= \{(2n-r_2-2 , r_2-2t): t \in [\![1,\frac{r_2}{2}]\!]\}$\\
\textbf{Case 2.2:} $\nu, \gamma \in [\![m+1,n-1]\!] $.\\
$N_{r_1}((0,0)) = \{(r_1, 2m-2t):  t \in [\![0,m]\!]\}$\\
$N_{r_2}((2n-2,0)) = \{(2n-r_2-2 , 2m-2t):  t \in [\![0,m]\!]\}$\\
$N_{r_1}((0,0)) \cap N_{r_2}((2n-2,0))= A \cup B $ and $r_2=2\nu$, where $\nu \in [\![1,n-2]\!]$\\
if $n=2m+1 $  and $r_1=2\nu$, where $\nu \in [\![1,n-2]\!]$ then
$$A=\left\{
\begin{array}{ll}
\{(r_1, r_1-2t): &  t \in [\![0,\frac{r_1}{2}]\!]\}  , \enspace \text{if}  \enspace  \nu \in [\![1,\frac{n-1}{2}]\!] ; \enspace r_2=2n-r_1-2\\
\{(r_1, 2n-r_1-2t-2): &  t \in [\![0,\frac{2n-r_1-2}{2}]\!]\}, \enspace \text{if}   \enspace  \nu \in [\![\frac{n+1}{2},n-2]\!] ; \enspace r_2=2n-r_1-2\\ 
\end{array}
\right.$$
and $B$ is empty.\\
if $ n < 2m+1, n \neq m $ and $r_1=2\nu$, where $\nu \in [\![1,n-2]\!]$ \\
$$A=\left\{
\begin{array}{ll}
\{(r_1, r_1-2t): &  t \in [\![0,\frac{r_1}{2}]\!]\}, \enspace \text{if}    \enspace  \nu \in [\![1,\lfloor\frac{n-1}{2}\rfloor]\!] ; \enspace r_2=2n-r_1-2\\
\{(r_1, 2n-r_1-2t-2): &  t \in [\![0,\frac{2n-r_1-2}{2}]\!]\}, \enspace \text{if} \enspace  \nu \in [\![\lfloor\frac{n+1}{2}\rfloor,n-2]\!] ; \enspace r_2=2n-r_1-2\\ 
\end{array}
\right.$$
and 
$B= \{(2n-r_1+2t-2 , r_1): \enspace  t \in [\![0,r_1-n+1]\!]\} , \enspace \text{if}   \enspace  r_1 \in [\![\lfloor\frac{n+1}{2}\rfloor,m]\!] ; \enspace r_2=r_1$.\\
In both cases, the intersection $N_{r_1}((0,0)) \cap N_{r_2}((2n-2,0))$ is non-empty, as shown by the inclusion of valid points in sets $A$ and $B$. Thus, we have proved that $N_{r_1}((0,0)) \cap N_{r_2}((2n-2,0))$ is indeed non-empty.
\end{proof}

 \begin{lem}\label{lem6} 

If $n\leq 2m+1$, then $ \dim( VG^2_{m,n})\leq 3$.
 \end{lem}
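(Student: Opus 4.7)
The plan is to mirror the strategy used for $VG^1_{m,n}$ in Lemma~\ref{lem3}. I propose the candidate resolving set $\{(0,0),\,(2n-2,0),\,(2m,2m)\}$, where the first two vertices are the corners of the bottom boundary used in Lemma~\ref{lem5} and the third is the diagonally opposite corner (or a slight variant, say $(2m,2m)$ or $(2m-2,2m)$, depending on parity). The goal is to show that for every vertex $w\in V(VG^2_{m,n})$, the code $(d(w,(0,0)),\,d(w,(2n-2,0)),\,d(w,(2m,2m)))$ is unique. Equivalently, for every triple of radii $(r_1,r_2,r_3)$ the intersection $N_{r_1}((0,0))\cap N_{r_2}((2n-2,0))\cap N_{r_3}((2m,2m))$ contains at most one vertex.

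The first step is to invoke Lemma~\ref{lem5}, which already pins the pairwise intersection $N_{r_1}((0,0))\cap N_{r_2}((2n-2,0))$ to the set $A\cup B$. So only the collinear residual ambiguity on $A$ (a descending diagonal of the form $(r_1,\cdot)$ or $(r_1,2n-r_1-2t-2)$) and the horizontal residual on $B$ (points of the form $(2n-r_1+2t-2,r_1)$) need to be killed by the third landmark.

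Next, I would compute $N_{r_3}((2m,2m))$ explicitly, splitting into cases according to the parity of $r_3$ and whether $n=2m+1$ or $n<2m+1$, mirroring the Case~1 / Case~2 split in Lemma~\ref{lem3}. In each subcase the sphere around $(2m,2m)$ decomposes into an arc along the top boundary of the form $\{(2m+2t-r_3,2m-r_3):\ldots\}$ together with two descending diagonals $\{(2m+r_3,2m-2t):\ldots\}$ and $\{(2m-r_3,2m-2t):\ldots\}$. I would then check directly that the two diagonals from the top corner cannot meet the sets $A$ or $B$ in two distinct points simultaneously, because (i) the $A$--diagonals issuing from $(r_1,r_1)$ and $(r_1,2n-r_1-2)$ have slope opposite to the near diagonal of $N_{r_3}((2m,2m))$, forcing at most one crossing, and (ii) the horizontal line $B=\{(\cdot,r_1)\}$ meets each of the two $(2m,2m)$-diagonals at most once, and the vertex of $B$ on a top-boundary arc is pinned by the parity of $2m-r_3$.

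The main obstacle will be the bookkeeping: the three-way split from Lemma~\ref{lem5} (Cases~1.1--1.3, 2.1--2.2, plus the $n=2m+1$ versus $n<2m+1$ dichotomy) combined with roughly four subcases for $N_{r_3}((2m,2m))$ yields many subcases. Most of them reduce to a one-line parity or range comparison showing either that the $x$-coordinates forced by $N_{r_3}$ lie outside the $x$-range of $A\cup B$, or that the unique intersection point already coincides with the one allowed by pairwise resolving. I expect to conclude, as in Lemma~\ref{lem3}, that the triple intersection is empty for all $(r_1,r_2,r_3)$, giving $\dim(VG^2_{m,n})\leq 3$; the proof will end with a sentence noting the analogy with Lemma~\ref{lem3} and deferring the near-identical calculations.
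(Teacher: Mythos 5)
Your proposal follows essentially the same route as the paper: the same candidate resolving set $\{(0,0),(2n-2,0),(2m,2m)\}$, the same reliance on Lemma~\ref{lem5} to control the pairwise intersection $N_{r_1}((0,0))\cap N_{r_2}((2n-2,0))$, and the same case analysis of $N_{r_3}((2m,2m))$ by parity of $r_3$ and the dichotomy $n=2m+1$ versus $n<2m+1$. Your phrasing of the goal (the triple intersection contains \emph{at most one} vertex) is in fact the more careful statement of what needs to be verified.
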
 
\begin{proof}
 We prove that $\{(0,0),(2n-2,0),(2m,2m)\}$ forms a resolving set of the graph.\\
 \textbf{Case 1:} $n=2m+1.$ \\
 \textbf{Case 1.1:} $r_3=2\nu-1$, where $\nu \in [\![1,\frac{n+1}{2}]\!].$\\
\textbf{Case 1.1.1:} $\nu =1. $\\
$N_{r_3}((2m,2m)) = S_1 \cup S_2$, where 
$S_1= \{(2m+r_3, 2m-2t-1):  \enspace  t \in [\![0,\frac{r_3-1}{2}]\!]\}$ and \\
$S_2= (2m-r_3, 2m-2t-1):   t \in [\![0,\frac{r_3-1}{2}]\!]\}$\\
\textbf{Case 1.1.2:} $\nu \in [\![2,\frac{n-1}{2}]\!].$\\
$N_{r_3}((2m,2m)) = S_1 \cup S_2\cup S_3$, where 
$S_1= \{(2m-r_3+2t, 2m-r_3) : \enspace  t \in [\![1,r_3-1]\!]$,\\ 
$S_2= \{(2m+r_3, 2m-2t-1):  \enspace  t \in [\![0,\frac{r_3-1}{2}]\!]$ and \\
$S_3= \{(2m-r_3, 2m-2t-1):   t \in [\![0,\frac{r_3-1}{2}]\!]\}$ \\
\textbf{Case 1.1.2:} $\nu = \frac{n+1}{2}.$\\
 $N_{r_3}((2m,2m)) = \{(2m-r_3+1, 2m-2t-1): t \in [\![0,m-1]\!]\} $\\ 
\textbf{Case 1.2:} $r_3=2\nu$, where $\nu \in [\![1,\frac{n-1}{2}]\!].$\\
$N_{r_3}((2m,2m)) = S_1 \cup S_2\cup S_3$, where 
$S_1= \{(2m-r_3+2t, 2m-r_3) : \enspace t \in [\![1,r_3-1]\!]\} $,\\
$S_2= \{(2m+r_3, 2m-2t) : \enspace  t \in [\![0,\frac{r_3}{2}]\!]\}$, and 
$S_3= \{(2m-r_3, 2m-2t) : \enspace  t \in [\![0,\frac{r_3}{2}]\!] \}$\\
\textbf{Case 2:} $n < 2m+1$, $n\neq m.$\\ 
\textbf{Case 2.1:} $r_3=2\nu-1$, where $\nu \in [\![1,m]\!].$\\
\textbf{Case 2.1.1:} $\nu=1.$\\
$N_{r_3}((2m,2m)) = S_1 \cup S_2$, where 
$S_1= \{(2m+r_3, 2m-2t-1) : \enspace  t \in [\![0,\frac{r_3-1}{2}]\!]\}$ and \\
$S_2= \{(2m-r_3, 2m-2t-1) : \enspace  t \in [\![0,\frac{r_3-1}{2}]\!] \}$\\
\textbf{Case 2.1.2:} $\nu \in [\![2,n-m-1]\!].$\\
$N_{r_3}((2m,2m)) = S_1 \cup S_2\cup S_3$, where 
$S_1= \{(2m-r_3+2t, 2m-r_3):   t \in [\![1,r_3-1]\!]\},$\\
$S_2= \{(2m+r_3, 2m-2t-1) : \enspace  t \in [\![0,\frac{r_3-1}{2}]\!]\}$ and 
$S_3= \{(2m-r_3, 2m-2t-1) : \enspace  t \in [\![0,\frac{r_3-1}{2}]\!] \}$\\
\textbf{Case 2.1.3:} $\nu \in [\![n-m,m]\!].$\\
$N_{r_3}((2m,2m)) = S_1 \cup S_2$, where 
$S_1= \{(2m-r_3+2t, 2m-r_3):   t \in [\![1,\frac{r_3-1}{2}+n-m-1]\!]\}$ and \\
$S_2= \{(2m-r_3, 2m-2t-1) : \enspace  t \in [\![0,\frac{r_3-1}{2}]\!]\!] \}$\\
\textbf{Case 2.2:} $r_3=2\nu$, where $\nu \in [\![1,m]\!].$\\
\textbf{Case 2.2.1:} $\nu \in [\![1,n-m-1]\!] $.\\
$N_{r_3}((2m,2m)) = S_1 \cup S_2\cup S_3$, where
$S_1= \{(2m-r_3+2t, 2m-r_3):   t \in [\![1,r_3-1]\!] \}$, \\
$S_2= \{(2m+r_3, 2m-2t) : \enspace  t \in [\![0,\frac{r_3}{2}]\!] \}$ and 
$S_3= \{(2m-r_3, 2m-2t) : \enspace t \in [\![0,\frac{r_3}{2}]\!] $\}\\
\textbf{Case 2.2.2:} $\nu \in [\![n-m,m]\!] $.\\
$N_{r_3}((2m,2m)) = S_1 \cup S_2$, where
$S_1= \{(2m-r_3+2t, 2m-r_3):  t \in [\![1,\frac{r_3}{2}+n-m-1]\!] \}$ and \\
$S_2= \{(2m-r_3, 2m-2t) : \enspace  t \in [\![0,\frac{r_3}{2}]\!] $\}\\
By Lemma \ref{lem5}, the intersection 
$N_{r_1}(0,0)\cap N_{r_2}(2n-2,0) \cap N_{r_3}(2m,2m)$ is empty.
Thus, $\{(1,0),(2n-1,0),(2m+1,2m)\}$ forms a resolving set for $VG^2_{m,n}$. Hence,  $ \dim( VG^2_{m,n})\leq 3$.
\end{proof}

\begin{thm}\label{mgthm4}
 For $n \leq  2m+1$, $\dim (VG^2_{m,n}) =3$.
\end{thm}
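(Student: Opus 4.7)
The plan is to establish $\dim(VG^2_{m,n}) = 3$ by chaining matching lower and upper bounds, both of which are already in place from the preceding lemmas. For the lower bound $\dim(VG^2_{m,n}) \geq 3$, I would invoke Lemma \ref{lem4}, whose hypothesis is exactly $n \leq 2m+1$. That lemma, under Theorem \ref{md2}, enumerates every configuration of two vertices $u,v$ of degree at most $3$ joined by a unique shortest path (the three categories being: on a common boundary segment, endpoints of the same obtuse line, endpoints of the same acute line) and exhibits in each subcase a pair of twin vertices that $\{u,v\}$ fails to distinguish, yielding a contradiction.

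For the upper bound $\dim(VG^2_{m,n}) \leq 3$, I would invoke Lemma \ref{lem6}, which exhibits the explicit three-element candidate $\mathscr{R}=\{(0,0),(2n-2,0),(2m,2m)\}$. That verification splits first on whether $n = 2m+1$ or $n < 2m+1$, and within each regime on the parity of the radius $r_3$ from the third landmark $(2m,2m)$, in order to pin down the set $N_{r_3}((2m,2m))$. Combined with Lemma \ref{lem5}, which describes $N_{r_1}((0,0)) \cap N_{r_2}((2n-2,0))$, the analysis shows that no vertex can share its full distance-triple with another, so $\mathscr{R}$ resolves the graph.

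Putting the two bounds together forces $\dim(VG^2_{m,n}) = 3$. I do not anticipate a genuine new obstacle at the level of the theorem itself: the heavy lifting has been done in Lemmas \ref{lem4}, \ref{lem5}, and \ref{lem6}. Moreover, because the definition of $VG^2_{m,n}$ already enforces $m < n$, the degenerate square case $n = m$ that required a separate treatment in Theorem \ref{thm2} does not arise here, and the proof reduces to a clean two-line invocation. The only bookkeeping worth including in the written proof is a brief explicit check that the hypotheses of Lemmas \ref{lem4} and \ref{lem6} coincide with the hypothesis $n \leq 2m+1$ of the theorem, which they do verbatim.
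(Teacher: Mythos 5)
Your proposal is correct and matches the paper's proof exactly: Theorem~\ref{mgthm4} is obtained there by combining the lower bound from Lemma~\ref{lem4} with the upper bound from Lemma~\ref{lem6}, both stated under the hypothesis $n \leq 2m+1$. Your observation that the constraint $m<n$ in the definition of $VG^2_{m,n}$ removes the need for a separate square case (as in Theorem~\ref{thm2}) is consistent with the paper's treatment.
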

\begin{proof}
 By  Lemmas \ref{lem4} and \ref{lem6}, we get  $\dim (VG^2_{m,n})\leq 3$ and
$\dim (VG^2_{m,n}) \geq 3$. Therefore, $\dim (VG^2_{m,n}) =3$. 
\end{proof}
 \begin{conj} 
For $n > 2m+1$, $\dim (VG^1_{m,n})= \dim (VG^2_{m,n})= \lceil \frac{n-1}{m} \rceil +1.$
 \end{conj}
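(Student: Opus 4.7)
The plan is to prove the equality in two directions: exhibit a resolving set of the claimed cardinality to get the upper bound, and produce an unresolved pair for every smaller candidate set to get the matching lower bound. Throughout I would rely on the obtuse-line/acute-line dichotomy displayed in Figure~\ref{F2} and exploited throughout Lemmas~\ref{lem1} and~\ref{lem4}, together with the sphere-intersection bookkeeping of Lemmas~\ref{lem2} and~\ref{lem5}. Set $k=\lceil(n-1)/m\rceil$; the target is $\dim(VG^1_{m,n})=\dim(VG^2_{m,n})=k+1$.

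\textbf{Upper bound.} For $VG^1_{m,n}$ I would propose the candidate resolving set
\[
\mathscr{R}=\{(2jm+1,0):0\le j\le k-1\}\cup\{(2m+1,2m)\},
\]
with the rightmost bottom landmark nudged to $(2n-1,0)$ when $(k-1)m<n-1$. Conceptually, each pair of consecutive bottom landmarks delimits a horizontal sub-grid of width at most $2m$, which locally reproduces the setting of Theorem~\ref{thm2}; the top landmark $(2m+1,2m)$ plays exactly the role it did in Lemma~\ref{lem3}, namely it breaks the reflection symmetry between a strip and its mirror image across the bottom boundary. Verification then reduces to an iterated application of Lemma~\ref{lem2} (generalized to a pair of bottom landmarks at arbitrary even spacing) combined with the triple-intersection argument of Lemma~\ref{lem3}. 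The construction for $VG^2_{m,n}$ is analogous, using $\{(2jm,0):0\le j\le k-1\}\cup\{(2m,2m)\}$, and its verification parallels Lemma~\ref{lem6}.

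\textbf{Lower bound.} I expect this direction to be the main work. The strategy is to quantify the symmetry arguments of Lemmas~\ref{lem1} and~\ref{lem4}: for each bottom ``twin pair'' $T_t=\{(2t+1,0),(2t+3,0)\}$ (and the analogous top, left and right twin pairs), describe exactly the set of vertices that resolve $T_t$. The expectation, suggested by Cases~1--3 of Lemma~\ref{lem1}, is that this set is confined to a diagonal cone of horizontal footprint $O(m)$ rooted at $T_t$. Since the bottom boundary contains roughly $n$ such twin pairs while each landmark resolves only $O(m)$ of them, a pigeonhole count forces any $\mathscr{R}$ with $|\mathscr{R}|\le k$ to leave some $T_t$ entirely unresolved, contradicting the assumption. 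Combining with the degree constraint from Theorem~\ref{md2} eliminates the boundary exceptions.

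\textbf{Main obstacle.} The principal difficulty is quantifying the ``resolving cone'' of a single landmark precisely. For landmarks lying on the boundary the computation reduces to the explicit sphere description of Lemma~\ref{lem2}, but for interior landmarks a shortest path to a twin can bend around either side, and the wrap-around through obtuse and acute lines produces several delicate sub-cases that depend on the landmark's diagonal coordinate. I anticipate that a constant number of sub-cases suffices, but verifying this will require case analysis on roughly the same scale as Lemmas~\ref{lem1}--\ref{lem3} combined. A secondary obstacle, easier but not free, is the end effects when $n-1$ is not a multiple of $m$: the last strip is short, and the rightmost bottom landmark (or the corner landmark $(2m+1,2m)$) must be chosen adaptively so that the upper and lower bounds meet exactly rather than differing by one.
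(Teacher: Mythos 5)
First, note that the statement you are proving is left as a \emph{conjecture} in the paper: the authors supply only figures as evidence and a candidate resolving set for the upper bound, with no proof of either direction. So there is no proof of record to match, and your proposal, which is explicitly a plan with an acknowledged open obstacle, does not close the gap either. Two concrete problems stand out. For the upper bound, your landmark set places all $k$ bottom landmarks on the line $y=0$ at spacing $2m$ and adds the single vertex $(2m+1,2m)$. The paper's own candidate set $R_1=\{(1+2mt,\, m[1+(-1)^{t+1}]): 0\le t\le k-1\}$ instead \emph{alternates} the landmarks between the bottom boundary $y=0$ and the top boundary $y=2m$, precisely so that each consecutive pair of landmarks is a ``diagonal'' pair of the kind that drives Lemma~\ref{lem3}. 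Lemma~\ref{lem2} is in fact a warning against your placement: it shows that two landmarks on the same bottom boundary have many coinciding distance spheres, and the whole point of Lemma~\ref{lem3} is that a third landmark on the \emph{opposite} boundary is needed to break those coincidences. With all bottom landmarks collinear and only one top landmark at $x=2m+1$, vertices in strips far to the right (say $x\gg 4m$) see only bottom landmarks at useful resolution, and the twin pairs of Case~1.1 of Lemma~\ref{lem1} recur there; your set is therefore unlikely to resolve for large $n$, and at minimum you would need to justify why it does.

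For the lower bound, the pigeonhole count you sketch cannot deliver the exact value $k+1=\lceil\frac{n-1}{m}\rceil+1$. An argument of the form ``there are about $n$ twin pairs, each landmark resolves $O(m)$ of them, hence at least $\Omega(n/m)$ landmarks are needed'' only controls the answer up to a multiplicative constant hidden in the $O(\cdot)$, whereas the conjecture asserts an exact formula; to get $k+1$ on the nose you would need the precise ``resolving cone'' of an arbitrary (including interior) landmark, which is exactly the computation you flag as unresolved. Until that is quantified with exact constants, neither the matching lower bound nor the end-effect analysis when $m\nmid(n-1)$ goes through. In short: the approach is a reasonable research plan, but both halves contain genuine gaps, and the upper-bound witness should be replaced by (or reconciled with) the paper's alternating set $R_1\cup\{(x,y)\}$ before attempting verification.
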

  Figure \ref{F5} and \ref{F6} provides evidence supporting this conjecture. Additionally, the correctness of the upper bound in Conjecture 1 for the case $n > 2m+1$ can generally be verified by considering the following sets. \\
  Define $R_1=\{(1+t(2m), m[1+(-1)^{t+1}])\quad; 0  \leq t \leq \lceil \frac{n-1}{m} \rceil -1 \} $. The set  $ R_1 \cup (x,y)$ forms a resolving set for $VG^1_{m,n}$ with size $\lceil \frac{n-1}{m} \rceil +1$.\\ 
  where, $$(x,y)=\left\{
\begin{array}{ll}
(2n, 2(n+m-\lceil \frac{n-1}{m} \rceil m)-1); & \text{if} \enspace \lceil \frac{n-1}{m} \rceil \enspace \text{is odd},  \\
(2n, 1+2(\lceil \frac{n-1}{m} \rceil m-n)); & \text{if} \enspace \lceil \frac{n-1}{m} \rceil \enspace \text{is even}.  \\ 
\end{array}
\right.$$
 
   \begin{figure}[H]
	\centering
\includegraphics[scale=0.5]{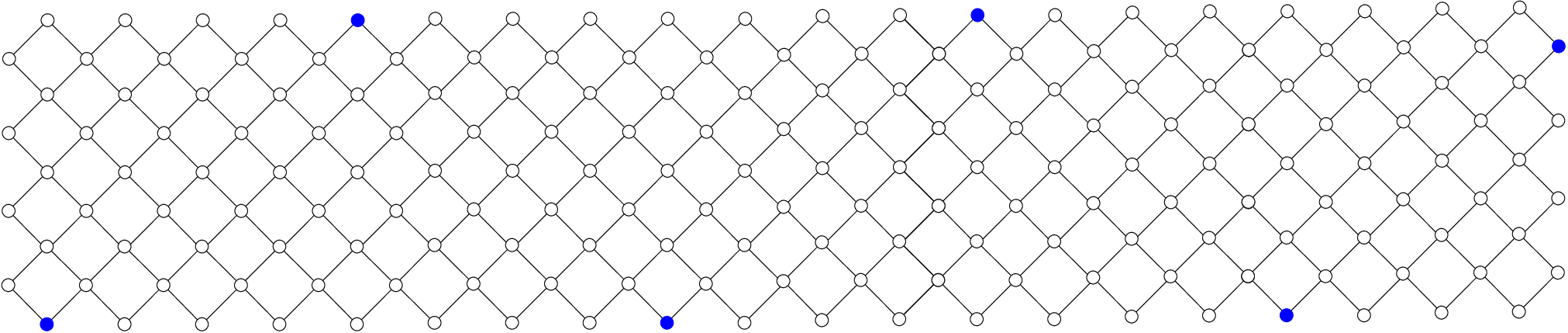}
	\caption{$VG^1_{4,20}$}
	\label{F5}
\end{figure}
Similarly, define $R_2=\{(2mt, m[1+(-1)^{t+1}])\quad; 0  \leq t \leq \lceil \frac{n-1}{m} \rceil -1 \} $. The set  $ R_2 \cup (x,y)$ is a resolving set for $VG^2_{m,n}$ with size $\lceil \frac{n-1}{m} \rceil +1$,\\ 
where, $$(x,y)=\left\{
\begin{array}{ll}
(2n-2, 2(n+m-\lceil \frac{n-1}{m} \rceil m-1); & \text{if} \enspace \lceil \frac{n-1}{m} \rceil \enspace \text{is odd},  \\
(2n-2, 2(\lceil \frac{n-1}{m} \rceil m-n+1)); & \text{if} \enspace \lceil \frac{n-1}{m} \rceil \enspace \text{is even}.  \\ 
\end{array}
\right.$$
\begin{figure}[H]
	\centering
\includegraphics[scale=0.5]{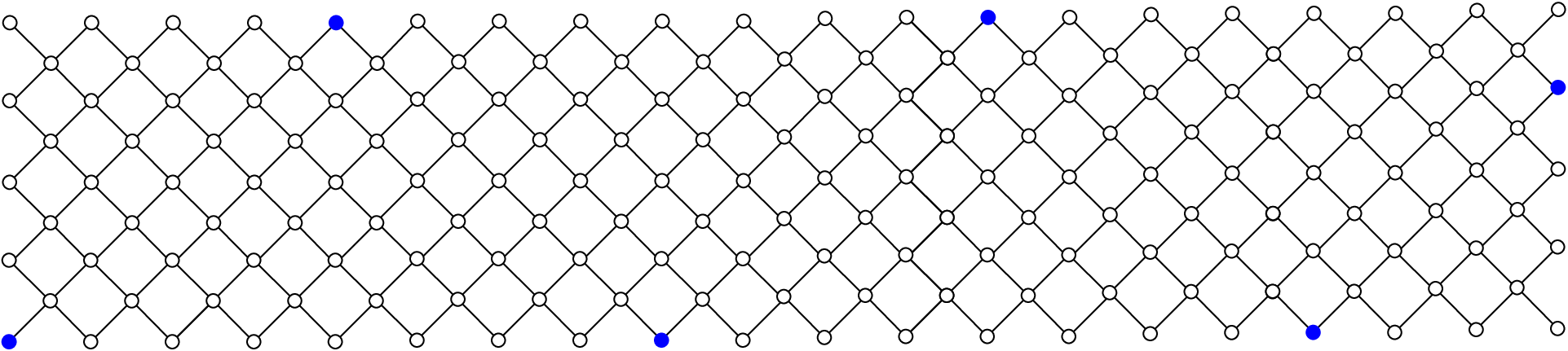}
	\caption{$VG^2_{4,20}$}
	\label{F6}
\end{figure}

\section{Conclusion}
The Villarceau grid is a geometric structure that emerges from the intersection of a cylinder and a plane, creating a unique pattern of circles on the surface of a torus. While it might seem like an abstract concept, its applications go beyond pure mathematics. This grid offers a hands-on way to explore and understand the complex shapes and properties of tori. In the world of computer graphics and visualization, the Villarceau grid can be used to create striking visual effects and simulate curved surfaces with precision. Its intricate design has the potential to inspire new ideas in architecture and design, where the principles behind the grid could lead to innovative structures and patterns. Furthermore, the grid is relevant in scientific fields where toroidal shapes play a crucial role, such as in the design of fusion reactors or the study of fluid dynamics. By providing a clearer understanding of these shapes, the Villarceau grid can contribute to advances in technology and engineering. In short, it is a tool that bridges the gap between abstract mathematics and real-world applications.\\
\textbf{Data availability:} No data associated with the manuscript.\\
\textbf{Declarations:}\\
 \textbf{Conflict of interest:} The authors declare that there is no conflict of interest regarding the publication of 
this paper.

\end{document}